\newcommand{\matrixgeq}{\succeq}
\newcommand{\matrixleq}{\preceq}
\newcommand{\matrixgt}{\succ}
\newcommand{\A}{\mathcal{A}}
\newcommand{\M}{\mathbf{M}}
\newcommand{\bfA}{\mathbf{A}}
\newcommand{\X}{\mathbf{X}}
\newcommand{\x}{\mathbf{x}}
\newcommand{\Y}{\mathbf{Y}}
\newcommand{\V}{\mathbf{V}}
\newcommand{\bfb}{\mathbf{b}}
\newcommand{\R}{\mathbb{R}}
\newcommand{\Rsymn}{\mathcal{S}^n}
\newcommand{\Rsym}{\mathcal{S}}
\newcommand{\z}{\mathbf{z}}
\newcommand{\Q}{\mathbf{Q}}
\newcommand{\e}{\mathbf{e}}
\newcommand{\y}{\mathbf{y}}
\newcommand{\q}{\mathbf{q}}
\newtheorem{theorem}{Theorem}
\newenvironment{proof}[1][Proof]{\begin{trivlist}
\item[\hskip \labelsep {\bfseries #1}]}{\end{trivlist}}
\newtheorem{example}{Example}
\newtheorem{lemma}{Lemma}
\newtheorem{corollary}{Corollary}
\newcommand{\ind}{I}
\newcommand{\I}{\mathbf{I}}
\newcommand{\indpos}{\ind_{\X\matrixgeq0}}
\newcommand{\indaxb}{\ind_{\A(\X)=\bfb}}
\newcommand{\indposaxb}{\ind_{\X\matrixgeq0, \A(\X)=\bfb}}
\newcommand{\blambda}{\bm \lambda}
\newcommand{\brhot}{\tilde {\bm \rho}}
\newcommand{\lam}{\mathbf{\Lambda}}
\newcommand{\lamy}{\lam_\Y}
\newcommand{\lamz}{\lam_\Z}
\newcommand{\lamtilde}{\tilde{\lam}}
\newcommand{\lamxop}{\lam_{\x_0^\perp}}
\newcommand{\ivp}{\I_{V^\perp}}
\newcommand{\proj}[1]{\mathcal{P}_{#1}}
\newcommand{\projvp}{\proj{V^\perp}}
\newcommand{\projxop}{\proj{\x_0^\perp}}
\newcommand{\Ak}{\bfA^{(k)}}
\newcommand{\Qk}{\Q^{(k)}}
\newcommand{\Qinf}{\Q^{(\infty)}}
\newcommand{\lik}{\lambda_i^{(k)}}
\newcommand{\liinf}{\lambda_i^{(\infty)}}
\newcommand{\eps}{\varepsilon}
\newcommand{\Z}{\mathbf{Z}}
\newcommand{\xoo}{\x_{0, \Omega}}
\def \zi {\mathbf{z}_i}
\DeclareMathOperator*{\sgn}{sgn}
\DeclareMathOperator*{\trace}{tr}
\DeclareMathOperator*{\Span}{span}
\DeclareMathOperator*{\range}{range}
\def \<{\langle}
\def \>{\rangle}
\begin{document}
\title{Conditions for Existence of Dual Certificates \\in Rank-One Semidefinite Problems}
          %For each author, make a block with the following four macros:
\author{Paul Hand \\ $\;$ \\ Massachusetts Institute of Technology, Department of Mathematics, \\ 77 Massachusetts Avenue, Cambridge, MA 02139}
\date{November 2013, Revised February 2014}
\maketitle

\begin{abstract}
Several signal recovery tasks can be relaxed into semidefinite programs with rank-one minimizers.  A common technique for proving these programs succeed is to construct a dual certificate.  Unfortunately, dual certificates may not exist under some formulations of semidefinite programs.  In order to put problems into a form where dual certificate arguments are possible, it is important to develop conditions under which the certificates exist.  In this paper, we provide an example where dual certificates do not exist.  We then present a completeness condition under which they are guaranteed to exist.  For programs that do not satisfy the completeness condition, we present a completion process which produces an equivalent program that does satisfy the condition.  
The important message of this paper is that dual certificates may not exist for semidefinite programs that involve orthogonal measurements with respect to positive-semidefinite matrices.  Such measurements can interact with the positive-semidefinite constraint in a way that implies additional linear measurements.  If these additional measurements are not included in the problem formulation, then dual certificates may fail to exist.
As an illustration, we present a semidefinite relaxation for the task of finding the sparsest element in a subspace.  One formulation of this program does not admit dual certificates.  The completion process  produces an equivalent formulation which does admit dual certificates.  
\end{abstract}

\section{Introduction}

We consider the problem of showing that the rank-one matrix $\X_0 = \x_0 \x_0^t$ minimizes  the semidefinite program 
\begin{align}
\min f(\X) \text{ subject to } \X \matrixgeq 0, \A(\X) = \bfb,  \label{convex-problem}
\end{align}
where $\X \in \Rsymn$ is a symmetric and real-valued $n \times n$ matrix, $f$ is convex and continuous, $\A$ is linear, and $\A(\X_0) = \bfb \in \R^m$.  Let $\langle \X, \Y \rangle = \trace(\Y^t \X)$ be the Hilbert-Schmidt inner product.  Matrix orthogonality is understood to be with respect to this inner product.  The linear measurements $\A(\X) = \bfb$ can be written as
$$\A(\X)_i = \langle \X, \bfA_i \rangle = b_i  \text{ for } i = 1, \ldots, m,$$
for certain symmetric matrices $\bfA_i$.  Note that the adjoint of $\A$ is $\A^*\blambda = \sum_{i=1}^m \lambda_i \bfA_i$.

A common approach for proving that $\X_0$ minimizes \eqref{convex-problem} is to exhibit what is known as a dual certificate at $\X_0$.  Unfortunately, such dual certificates do not always exist.  In this paper, we provide a simple example for which they do not.  We then present a sufficient and weakly necessary condition for when they do exist.  If a dual certificate does not exist, we provide a process for completing the problem into another problem of form \eqref{convex-problem} for which a dual certificate exists.

Understanding the existence of dual certificates is important for two reasons.  First, many papers directly construct them, or approximations thereof, in order to show that a particular program succeeds at finding a desired vector \cite{G2011, CSV2011, ARM2012, LV2012}.  If no dual certificate exists, then the pursuit of such dual certificates in some problems may be futile.  Second, dual certificate existence is important for negative results.  In order to show that $\X_0$ is not the solution to  \eqref{convex-problem}, one approach is to show that a dual certificate at $\X_0$ does not exist.  See \cite{LV2012} for an example.  For this proof method to work, it must be established that a dual certificate would exist if $\X_0$ were a minimizer.

\subsection{Motivating Signal Recovery  Problems} \label{sec:motivating-examples}

Semidefinite programs of form \eqref{convex-problem} are useful for signal recovery problems.  Two examples are phase retrieval and the recovery of the sparsest element in a subspace.  
\begin{alignat}{2}
&\text{Phase retrieval:} \qquad & &\text{Given: } |\< \x_0, \zi \> |^2 \text{ for known } \zi \in \R^n \label{phase-retrieval-problem}\\
&&&\text{Find: } \x_0. \notag \\
&&&\ \notag \\
&\text{Sparsest element in a subspace:} \qquad & &\text{Given: } \text{arbitrary basis of } W \subset \R^n  \label{subspace-problem}\\
&&&\text{Find: } \x_0\text{, the sparsest nonzero element in } W. \notag
\end{alignat}
Phase retrieval is the task of recovering a vector from only the amplitudes of linear measurements of it. It has applications in X-ray crystallography \cite{CESV2011}.  
The problem of finding the sparsest element in a subspace has applications to dictionary learning \cite{SWW2012} and blind source separation \cite{ZP2001}.  

Both of these problems are nonconvex,  and they can be relaxed to a semidefinite program with a procedure known as lifting.  Instead of directly seeking a vector $\x$, one seeks a lifted matrix $\X$ that is a proxy for $\x \x^t$.  Quadratic measurements on $\x$ become linear on $\X$, and the desired $\X$ is positive-semidefinite.  Lifting takes a nonconvex vector recovery problem and replaces it with an $n\times n$ semidefinite program.  The vector $\x$ can be estimated by computing the leading eigenvector of the optimal $\X$.  

The semidefinite relaxation of \eqref{phase-retrieval-problem} can be derived as follows.  The quadratic measurements can written as $b_i = | \< \x, \zi \> |^2 = \zi^t \x \x^t \zi = \zi^t \X \zi = \langle \X, \zi \zi^t \rangle,$ which are linear in $\X$.  We enforce the positive-semidefinite constraint because the desired $\X$ is $\x\x^t \matrixgeq 0$.  Further, as we seek a rank-one $\X$, we attempt to minimize the rank among all positive-semidefinite matrices consistent with the data.  Doing so is NP-hard \cite{CP1986}, so we instead minimize the sum of the singular values (nuclear norm) of $\X$, which is a convex relaxation of rank.  For positive-semidefinite matrices, the nuclear norm equals the trace of the matrix.  Hence, \eqref{phase-retrieval-problem} can be relaxed to the semidefinite program known as PhaseLift \cite{CESV2011, CSV2011}:
\begin{align}
\min \trace(\X) \text{ s.t. } \X \matrixgeq 0, \langle \X, \zi \zi^t \rangle = b_i. \label{phaselift}
\end{align}
Here and henceforth, a constraint with a  subscript is to be interpreted as imposing the corresponding constraint for all values of the subscript.  If we additionally seek a sparse $\x$ (and hence sparse $\X$), we could consider the compressive variant of PhaseLift  \cite{LV2012, OYDS2012}:
\begin{align}
\min \trace(\X) + \mu \|\X\|_1 \text{ s.t. } \X \matrixgeq 0, \langle \X, \zi \zi^t \rangle = b_i. \label{compressive-phaselift}
\end{align}
where $\mu$ is a scalar parameter and $\|\X\|_1$ is the $\ell_1$ norm of the vectorization of $\X$.

Our second motivating signal reconstruction problem is the task of finding the sparsest element in a subspace $W$.  Because zero is trivially the sparsest element, we need to introduce a normalization.  A natural choice is to search for the sparsest vector with unit length in $\ell_2$.  Using the $\ell_1$ norm as a proxy for sparsity, we consider the nonconvex problem
\begin{align}
\min \|\x\|_1 \text{ s.t. } \x \in W, \|\x\|_2 = 1. \label{min-l1-l2}
\end{align}
A  semidefinite relaxation of \eqref{min-l1-l2} is as follows.  Letting $\X$ again be a proxy for $\x\x^t$, the $\ell_2$ normalization becomes $\trace(\X)=1$.  Let $\|\X\|_1$ be the $\ell_1$ norm of the vectorization of $\X$.  Let $\{\z_i\}$ be a basis for $W^\perp$.  Then, the subspace constraint can be written as $\z_i^t \x = 0$.  After lifting, this constraint could be written as $\z_i^t \X \z_i  = \langle \X, \zi \zi^t \rangle = 0$.  Hence, the subspace problem \eqref{subspace-problem} can be relaxed into the semidefinite program  
\begin{align}
\min \|\X\|_1 \text{ s.t. } \X \matrixgeq 0,\langle \X, \zi \zi^t \rangle= 0, \trace(\X) = 1. \label{subspace-lifted}
\end{align}

The programs \eqref{phaselift}, \eqref{compressive-phaselift}, and \eqref{subspace-lifted} are all examples of rank-one semidefinite programs from signal recovery.  As we will discuss, problems \eqref{compressive-phaselift} and \eqref{subspace-lifted} exhibit markedly different behavior concerning dual certificates.  Under appropriate random models for $\zi$, dual certificates will exist at minimizers of \eqref{compressive-phaselift} but not of \eqref{subspace-lifted}.

%\subsection{Old intro}We now present two signal recovery problems that lead to semidefinite programs of form \eqref{convex-problem}.  Specifically, we consider the phase retrieval problem and the problem of finding the sparsest element in a subspace:

%A case of particular interest is when $\X_0=\x_0 \x_0^t$ is rank one. For example, the phase retrieval problem motivated by X-ray crystallography can be written as a rank one matrix recovery problem through a process called lifting \cite{CESV2011}.   That matrix recovery problem is known as PhaseLift \cite{CSV2011}, and it is of form \eqref{convex-problem} with $f(\X) = \trace(\X)$ and $\bfA_i = \z \z_i^t$ for vectors $\z_i$.  
%Another example is the compressive phase retrieval problem \cite{LV2012, OYDS2012}, where $\X_0$ is additionally presumed to be sparse.  In this case $f(\X) = \|\X\|_1 + c \ \trace (\X)$, where the first term is the entry-wise $\ell^1$ norm of $\X$.  These programs are semidefinite instances of low-rank matrix recovery problems \cite{CR2008, G2011}.

\subsection{Dual Certificates and Strong Duality}

A common method for proving that a particular $\X_0$ is a minimizer of a matrix recovery problem is certification  \cite{G2011, CSV2011, CL2012, DH2012, LV2012, ARM2012}.  A certificate of optimality at $\X_0$ defines a hyperplane that separates the feasible set of parameters from the set of points with smaller objectives.  Under mild assumptions on $f$, a certificate necessarily exists at a minimizer.  Finding a certificate guarantees that $\X_0$ is a minimizer.  For the claims in this section, $\X_0$ can be of arbitrary rank.  

In many cases, the certificate can be represented in terms of  the problem's Lagrangian dual variables.  Such certificates are known as dual certificates.  
The Lagrangian of \eqref{convex-problem} is given by
\begin{align}
\mathcal{L}(\X,  \blambda, \Q) = f(\X)  + \langle \blambda, \A(\X) - \bfb \rangle + \langle \Q, \X\rangle. \nonumber
\end{align}
where $\Q \in \Rsymn$ and $\blambda \in \R^m$. The Lagrangian dual problem for  \eqref{convex-problem} is
\begin{align}
\sup_{\Q\matrixleq 0, \blambda} \inf_\X \mathcal{L}(\X, \blambda, \Q) \label{dual-problem}
\end{align}
 The dual variables $(\blambda, \Q)$ are  dual-feasible when $\Q \matrixleq 0$.
%We note that the subdifferential of $\mathcal{L}$ with respect to $\X$ and evaluated at $\X_0$ is 
%$
%\partial \mathcal{L}(\X_0, \blambda, \Q)  = \partial f(\X_0) + \A^* \blambda +\Q. \label{subdifferential-lagrangian}
%$
Let $p^*$ and $d^*$ be the optimal values of \eqref{convex-problem} and \eqref{dual-problem}, respectively.  The duality gap is $p^* - d^*$, which is always nonnegative.  Problem \eqref{convex-problem} is said to satisfy strong duality when the duality gap is zero and the dual optimum is attained.

We call $(\blambda, \Q)$ a dual certificate at $\X_0$ if
\begin{align}
\A^* \blambda + \Q &\in - \partial f(\X_0), \label{optimality-condition} \\
\Q &\matrixleq 0, \Q \perp \X_0. \label{slackness-condition}
\end{align}
That is, a dual certificate $(\blambda, \Q)$ solves the KKT optimality conditions \cite{BV2004}.  Observe that the optimality condition \eqref{optimality-condition} ensures that $0$ is in the subdifferential of $\mathcal{L}$ with respect to $\X$ at $\X_0$.  Conditions \eqref{slackness-condition} enforce dual-feasibility and complementary slackness.
We will sometimes refer to $\Y = \A^* \blambda + \Q$ as a dual certificate. 
%To simplify notation, let the cone of candidate dual certificates be  \begin{align}
%S = \left \{ \A^* \blambda + \Q \mid \Q \matrixleq 0, \Q \perp \X_0 \right \}. \label{s-definition}
%\end{align}
%That is, $S$ is the direct sum of the range of $\A^*$ with the negative semidefinite cone orthogonal to $\X_0$.

By elementary arguments from convex optimization,  a dual certificate at $\X_0$ certifies that $\X_0$ is a minimizer.  Further, existence of a dual certificate is equivalent to strong duality.  

\begin{theorem} \label{theorem:dual-certificate-sufficiency}
If $(\blambda, \Q)$ is a dual certificate at $\X_0$, then $\X_0$ is a minimizer to \eqref{convex-problem}.
\end{theorem}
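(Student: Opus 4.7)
The plan is to take an arbitrary feasible $\X$ (so $\X \matrixgeq 0$ and $\A(\X) = \bfb$) and derive $f(\X) \geq f(\X_0)$ directly from the three pieces of the dual-certificate definition, each of which will neutralize one term of a natural decomposition. This is the usual KKT-sufficiency template: the subgradient condition absorbs $f$, the linear-feasibility equality absorbs the $\A^*\blambda$ piece, and complementary slackness plus the semidefinite signs handle the conic piece.

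First I would set $\Y := -(\A^*\blambda + \Q)$ so that the optimality condition \eqref{optimality-condition} reads $\Y \in \partial f(\X_0)$. The defining subgradient inequality for $f$ at $\X_0$ then gives $f(\X) \geq f(\X_0) + \langle \Y, \X - \X_0\rangle$, so the theorem reduces to proving the single inequality $\langle \A^*\blambda + \Q,\, \X - \X_0\rangle \leq 0$.

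Next I would split this expression into its two summands and treat them in turn. The linear-measurement term $\langle \A^*\blambda, \X - \X_0\rangle$ equals $\langle \blambda, \A(\X) - \A(\X_0)\rangle$ by the adjoint relation, and this vanishes because feasibility of both $\X$ and $\X_0$ forces $\A(\X) = \A(\X_0) = \bfb$. For the conic term, the orthogonality $\Q \perp \X_0$ from \eqref{slackness-condition} collapses $\langle \Q, \X - \X_0\rangle$ to $\langle \Q, \X\rangle$, and the sign conditions $\Q \matrixleq 0$, $\X \matrixgeq 0$ force this inner product to be nonpositive (via $\text{tr}(\X\Q) = \text{tr}(\X^{1/2}\Q\X^{1/2})$, whose argument is negative semidefinite). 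Adding the two summands delivers the required inequality.

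I do not anticipate any genuine obstacle: the argument is a direct KKT-sufficiency computation in which each of the three certificate conditions in \eqref{optimality-condition}--\eqref{slackness-condition} is used exactly once to cancel one of the three terms obstructing the inequality $f(\X) \geq f(\X_0)$. The only micro-step worth explicit verification is the sign of $\langle \Q, \X\rangle$ for PSD $\X$ and NSD $\Q$, which is the standard trace-plus-congruence observation noted above.
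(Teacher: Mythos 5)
Your proposal is correct and follows essentially the same argument as the paper: apply the subgradient inequality with $-(\A^*\blambda+\Q)\in\partial f(\X_0)$, kill the $\A^*\blambda$ term using $\A(\X)=\A(\X_0)=\bfb$, and reduce the $\Q$ term to $\langle \Q,\X\rangle\leq 0$ via $\Q\perp\X_0$, $\Q\matrixleq 0$, $\X\matrixgeq 0$. The only difference is cosmetic: the paper does this in a single chain of inequalities rather than splitting the two summands explicitly.
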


\begin{theorem} \label{theorem:dual-certificate-equivalence}
Let $\X_0$ be a minimizer of \eqref{convex-problem}.  The following are equivalent:
\begin{enumerate}
\item[(a)] $(\blambda, \Q)$ is a dual certificate at $\X_0$,
\item[(b)] $(\blambda, \Q)$ is dual optimal and strong duality holds.
\end{enumerate}

\end{theorem}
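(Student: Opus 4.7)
The plan is to prove each direction by tracking a single chain of (in)equalities between $d^\ast$, $g(\blambda,\Q):=\inf_\X \mathcal{L}(\X,\blambda,\Q)$, $\mathcal{L}(\X_0,\blambda,\Q)$, and $p^\ast=f(\X_0)$. The two key identities that make the chain collapse are $\A(\X_0)=\bfb$ (primal feasibility of $\X_0$) and $\<\Q,\X_0\>=0$ (complementary slackness). The only nontrivial ingredient is that, for fixed dual variables, the subdifferential of the Lagrangian in $\X$ is
\[
\partial_\X \mathcal{L}(\X,\blambda,\Q) = \partial f(\X) + \A^\ast \blambda + \Q,
\]
so the optimality condition \eqref{optimality-condition} is exactly the statement that $0\in\partial_\X \mathcal{L}(\X_0,\blambda,\Q)$, equivalently, $\X_0$ is a global minimizer of the convex function $\X\mapsto \mathcal{L}(\X,\blambda,\Q)$.

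For (a)$\Rightarrow$(b), I would first use \eqref{slackness-condition} to check dual feasibility of $(\blambda,\Q)$. Then from \eqref{optimality-condition} I conclude $g(\blambda,\Q)=\mathcal{L}(\X_0,\blambda,\Q)$. I would evaluate this Lagrangian: the linear term vanishes by $\A(\X_0)=\bfb$, and the term $\<\Q,\X_0\>$ vanishes by $\Q\perp\X_0$, leaving $g(\blambda,\Q)=f(\X_0)=p^\ast$. Combined with weak duality $d^\ast\leq p^\ast$ and $d^\ast\geq g(\blambda,\Q)$, I get $d^\ast=p^\ast$ and $(\blambda,\Q)$ attains the dual optimum, which is strong duality.

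For (b)$\Rightarrow$(a), dual feasibility of $(\blambda,\Q)$ immediately yields $\Q\matrixleq 0$. I would then write the chain
\[
p^\ast=d^\ast = g(\blambda,\Q)\leq \mathcal{L}(\X_0,\blambda,\Q)=f(\X_0)+\<\Q,\X_0\>\leq f(\X_0)=p^\ast,
\]
where the last inequality uses $\Q\matrixleq 0$ and $\X_0\matrixgeq 0$. All inequalities are therefore equalities. Equality on the right forces $\<\Q,\X_0\>=0$, and since $\Q\matrixleq 0$ and $\X_0\matrixgeq 0$ this gives $\Q\perp\X_0$, i.e.\ \eqref{slackness-condition}. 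Equality on the left forces $\X_0$ to minimize $\X\mapsto\mathcal{L}(\X,\blambda,\Q)$, so $0\in\partial f(\X_0)+\A^\ast\blambda+\Q$, which is \eqref{optimality-condition}.

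I don't expect a genuine obstacle here; the only care needed is the subdifferential calculus step, where I am using that $\blambda$ and $\Q$ enter the Lagrangian linearly in $\X$ so that their contribution to $\partial_\X \mathcal{L}$ is the singleton $\{\A^\ast\blambda+\Q\}$, together with the standard fact that $0\in\partial h(\X_0)$ characterizes a global minimum of a convex function $h$. Continuity of $f$ is what guarantees this sum rule for the subdifferential without qualification issues.
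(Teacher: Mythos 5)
Your proposal is correct and follows essentially the same route as the paper's proof: both directions hinge on evaluating the Lagrangian at $\X_0$, using primal feasibility and complementary slackness to collapse it to $f(\X_0)$, and characterizing the inner infimum via $0\in\partial_\X\mathcal{L}(\X_0,\blambda,\Q)$. Your write-up is just slightly more explicit about the weak-duality chain, but the substance is identical.
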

%For the sake of completeness, we provide a proof of these elementary theorems in Section \ref{section:background-theorems}.

\subsection{Dual Certificates May Not Exist}

For some semidefinite problems of form \eqref{convex-problem}, a dual certificate fails to exist at a minimizer.  
We provide two examples.  Let $\e_i$ be the $i$th standard basis vector, let $\q \otimes \y = \q \y^t + \y \q^t$ be the symmetric tensor product, let $\|\X\|_\text{F}$ be the Frobenius matrix norm.  We note the important technical fact that 
\begin{align}\X \matrixgeq 0 \text{ and }  \langle \X, \q\q^t \rangle &= 0 \text{ for } \q \in \R^n \Longrightarrow  \X\q = 0 
 \Longrightarrow  \langle \X, \q \otimes \e_j \rangle = 0 \ \forall \ j. \label{psd-property} 
\end{align}
That is, if a positive-semidefinite matrix is orthogonal to another positive-semidefinite matrix, there are additional linearly independent orthogonal measurements that hold automatically.  As we will see, these additional measurements play an important role in dual certificate existence.

\begin{example} \label{example:strong} The minimizer and only feasible point for
\begin{align}
\min \frac{1}{2} \|\X\|_\text{F}^2 \text{ subject to } \X \matrixgeq 0, \left \langle \X, \begin{pmatrix} 0 & 0 \\ 0 & 1 \end{pmatrix} \right \rangle &= 0, \left \langle \X, \begin{pmatrix} 1 & 1 \\ 1 & 1 \end{pmatrix} \right \rangle = 1 \label{counterexample} 
\end{align}
is $\X_0= \e_1 \e_1^t$.  Further, there is no dual certificate at $\X_0$ for this problem.
%Further, this problem satisfies strong duality where the dual optimum is not achieved.
\end{example}

By \eqref{psd-property}, any feasible $\X$ for Example \ref{example:strong} satisfies $\< \X, \e_1 \otimes \e_2 \> = 0$. Hence, the minimizer and only feasible point of \eqref{counterexample} is $\X_0$.
The subdifferential of $f(\X) = \frac{1}{2} \| \X \|_\text{F}^2$ is $\partial f( \X_0) = \{ \X_0 \}.$
We note that there is no dual certificate because there is no $(\blambda, \Q)$ satisfying  $ \Q \matrixleq0, \Q \perp \X_0$,  and
$$
- \begin{pmatrix} 1 & 0 \\ 0 & 0 \end{pmatrix} =  \lambda_1 \begin{pmatrix}0 & 0 \\ 0 & 1 \end{pmatrix} + \lambda_2 \begin{pmatrix} 1 & 1 \\ 1 & 1 \end{pmatrix} + \Q.
$$
In this problem, one can show that there is no duality gap.  Hence, the dual optimum is not attained.  
%The dual problem for \eqref{counterexample} is
%\begin{align}
%\sup_{\Q \matrixleq 0, \blambda} -\frac{1}{2} \left \|  \lambda_1 \begin{pmatrix}0&0\\0&1\end{pmatrix} + \lambda_2 \begin{pmatrix}1&1\\1&1\end{pmatrix} + \Q \right \|_\text{F}^2 - \blambda_2.
%\nonumber
%\end{align}
%Taking \begin{align}
%\blambda^{(\eps)} = \begin{pmatrix} 1 + 1/\eps \\-1 \end{pmatrix}, \  \Q^{(\eps)} = \begin{pmatrix}-\eps & 1 \\ 1 & -1/\eps \end{pmatrix},  \nonumber
%\end{align}
%we see that $g(\blambda^{(\eps)}, \Q^{(\eps)}) \to \frac{1}{2} = p^*$.  Hence, there is no duality gap, and by Theorem \ref{theorem:dual-certificate-sufficiency} the dual optimum is not achieved.  Further, any sequence of dual-feasible variables approaching the dual optimum necessarily diverges.  To see this, consider $g(\blambda^{(\eps)}, \Q^{(\eps)}) \to d^*=p^*$.  If both $\blambda^{(\eps)}$ and $\Q^{(\eps)}$ have a bounded subsequence, then there is a further subsequence that converges.  By continuity of $g$ for this example, there would be a dual optimal $(\blambda, \Q)$, which is a contradiction.  

Note that \eqref{psd-property} provides the additional measurement $\< \X, \e_1 \otimes \e_2 \> = 0$.  If this constraint were included in \eqref{counterexample}, then the program would be equivalent (in the sense that the feasible set is unchanged), yet a dual certificate would exist at $\X_0$.  This is a simple case of the completion process that will be described in Section \ref{section:completion-process}.

We now provide a more involved example from the problem of finding the sparsest element in a subspace.
\begin{example} \label{example:sparsest}
Let $\x_0 \in \R^n$ be such that it does not have constant magnitude on its support.  Let $\z_i \perp \x_0$, for $i = 1 \ldots m$ and $m\geq 1$.  There is no dual certificate at $\x_0 \x_0^t$ for
\begin{align}
\min \|\X\|_1 \text{ s.t. } \X \matrixgeq 0,\langle \X, \zi \zi^t \rangle= 0, \trace(\X) = 1. \label{example-sparsest-elt}
\end{align}
\end{example}
This example is a semidefinite relaxation for the problem of finding the sparsest element in the subspace orthogonal to $\Span\{\zi\}$.  By construction, $\x_0$ is in this space.  In some cases, if $\x_0$ is sparse enough, then $\x_0 \x_0^t$ will minimize \eqref{example-sparsest-elt}.  Example \ref{example:sparsest} establishes that even if $\x_0 \x_0^t$ is a minimizer, a dual certificate will generally fail to exist.  Hence, it it is futile to attempt a recovery proof based on constructing a dual certificate for the problem \textit{as written}.  In Section \ref{section:discussion}, we will provide an equivalent formulation that guarantees the existence of dual certificates at minimizers.

The proof of the claim in Example \ref{example:sparsest} is as follows.  By \eqref{optimality-condition}--\eqref{slackness-condition}, a dual certificate is such that
\begin{align}
\lambda_0 \I + \sum_{i = 1}^m \lambda_i \zi \zi^t + \Q &\in -\partial \|\cdot\|_1(\X_0) \label{dual-cert-cond-l1} \\
\Q &\matrixleq0, \Q \perp \x_0 \x_0^t
\end{align}
Letting $\Omega$ be the support of $\x_0$, note that the support of $\x_0 \x_0^t$ is $\Omega \times \Omega$.  Let $\xoo$ be the restriction of $\x_0$ to its support.  Note that the restriction to $\Omega \times \Omega$ of any subgradient of $\|\cdot \|_1$ at $\x_0 \x_0^t$ is $\sgn (\x_0 \x_0^t)$.  By computing the restriction onto $\Omega$ of the right multiplication of \eqref{dual-cert-cond-l1} by $\x_0$, we get 
\begin{align}
\lambda_0 \xoo = - \|\x_0\|_1 \sgn \xoo
\end{align}
This equality is impossible if $\xoo$ is not of constant magnitude.   Hence a dual certificate does not exist at $\X_0$.  

%\begin{example} \label{example-weak} Let $\X_0= \e_1 \e_1^*$. There is no dual certificate at $\X_0$ for the problem 
%\begin{align}
%\min \left \langle \X, \begin{pmatrix}0&1\\1&0 \end{pmatrix} \right \rangle  \text{ subject to } \X \matrixgeq 0, \left \langle \X, \begin{pmatrix} 0 & 0 \\ 0 & 1 \end{pmatrix} \right \rangle &= 0.  \nonumber
%\end{align}
%Further, this problem does not satisfy strong duality.
%\end{example}
%\noindent In this example, for any $(\blambda, \Q)$ with $\Q \matrixleq 0$, $\inf_\X \mathcal{L}(\X, \blambda, \Q)= - \infty$.  Hence the duality gap for this problem is infinite and strong duality does not hold.

\subsection{Constraint Qualifications}

The two examples of the prior section illustrate the well known fact  that semidefinite programs may not satisfy strong duality \cite{RTW1997, LW2012, F1994}.  A constraint qualification (CQ) is a condition under which strong duality is ensured.  For example, the presence of a strictly feasible $\X \matrixgt 0$ such that $\A(\X) = b$ is a constraint qualification and is known as Slater's condition \cite{BV2004}.  %A generalized Slater condition \cite{BW1981} requires a point in the relative interior of the cone of feasible points.  
Slater's condition can fail for low-rank matrix recovery problems.    For instance, the orthogonality constraints in Examples \ref{example:strong} and \ref{example:sparsest} ensure that there are no strictly feasibility points.  

The work in this paper will be based off the following constraint qualification.  The Rockafellar-Pshenichnyi condition \cite{LW2012, G69, W80, Psh71}  in the present context is that $\X_0$ minimizes \eqref{convex-problem} if and only if there exists a $Y \in (- \partial f(\X_0)) \cap \partial \indposaxb(\X_0)$, where $\indposaxb$ is the indicator function of the feasible set.  That is, a certificate (which may or may not be a dual certificate) exists at any minimizer and belongs to the subdifferential of the indicator function of the feasible set.  
The set of candidate dual certificates given by \eqref{optimality-condition}--\eqref{slackness-condition} is a cone, which we will denote by $S$.  Observe that
\begin{align}
S &:= \left \{ \sum_i \lambda_i \bfA_i + \Q \mid \Q \matrixleq 0, \Q \perp \X_0 \right \} 
= \partial \indpos(\X_0) + \partial \indaxb(\X_0),  \label{s:definition}
\end{align}
where the second equality follows directly from the definition of the sub\-differential of indicator functions.  Thus, if 
\begin{align}
\partial \indpos(\X_0) + \partial \indaxb(\X_0) = \partial \indposaxb(\X_0), \label{CQ-subgradient-additivity}
\end{align} 
then the certificate guaranteed by Rockafellar-Pshenichnyi is necessarily a dual certificate.  Put differently \eqref{CQ-subgradient-additivity} is a constraint qualification.  If it holds, then a dual certificate exists at any minimizer.  It is known as a weakest constraint qualification \cite{LW2012} because it is independent of the objective $f$.

\subsection{Main Results}

In this paper, we interpret the Rockafellar-Pshenichnyi constraint qualification in the context of rank-one matrix recovery problems.  We present a completeness condition on the measurement matrices $\bfA_i$ such that strong duality holds.  For the case when the condition fails, we will also present a completion process that adds measurements to \eqref{convex-problem} such that the feasible set is unchanged and the completeness condition holds.  In this equivalent problem, a dual certificate necessarily exists.

\subsubsection{Completeness conditions for dual certificate existence}

The lack of a dual certificate in Examples \ref{example:strong} and \ref{example:sparsest} happens because there are measurement matrices $\bfA_i$ that are positive-semidefinite and orthogonal to $\X_0$.  If this case is excluded, a dual certificate necessarily exists at the rank-one solution $\X_0$.

\begin{theorem} \label{theorem:specific}
If $\X_0=\x_0 \x_0^t$ minimizes \eqref{convex-problem} and there does not exist a nonzero $\bfA \in \Span\{\bfA_i\}_{i=1}^m $ such that $\bfA\matrixgeq 0$ and $\bfA \perp \X_0$, then there exists a dual certificate at $\X_0$. 
\end{theorem}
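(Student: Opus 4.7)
The plan is to show that the hypothesis implies Slater's condition for \eqref{convex-problem}; strong duality and the existence of a dual certificate at the minimizer $\X_0$ then follow from standard convex programming theory together with Theorem~\ref{theorem:dual-certificate-equivalence}.

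Let $V = \text{span}\{\bfA_i\}_{i=1}^m$ and $K = \{\bfA \in \Rsymn : \bfA \matrixgeq 0,\ \bfA \perp \X_0\}$, so that the hypothesis reads $V \cap K = \{0\}$. The cone $K$ is closed, convex, and pointed (it is a face of the PSD cone), while $V$ is a linear subspace. My first step would be to show that $K + V$ is closed. I would argue by contradiction: given $\bfA_n + \Y_n \to \Z$ with $\bfA_n \in K$, $\Y_n \in V$, and $\|\bfA_n\|$ unbounded, extracting a convergent subsequence of $\bfA_n/\|\bfA_n\|$ produces a unit-norm element of $K$ whose negative lies in $V$, contradicting $V \cap K = \{0\}$; the bounded case is immediate by compactness.

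Using the closedness of $K + V$, I would next produce a $\Z_0 \in V^\perp$ such that $\langle \Z_0, \bfA \rangle > 0$ for every nonzero $\bfA \in K$. The image of $K$ in the quotient $\Rsymn / V$ is a closed convex pointed cone (pointedness uses $V \cap K = \{0\}$ together with pointedness of $K$), so it admits a strictly positive linear functional; lifting this functional via the identification $V^\perp \cong (\Rsymn / V)^*$ yields $\Z_0$. Applying the resulting strict inequality to $\bfA = \y \y^*$ for each nonzero $\y \in \x_0^\perp$ (which lies in $K$) gives $\y^* \Z_0 \y > 0$, so $\projxop \Z_0 \projxop$ is positive definite on $\x_0^\perp$. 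I would then check that $\X_0 + t \Z_0$ is a Slater point for sufficiently small $t > 0$: feasibility is automatic since $\Z_0 \in V^\perp$ forces $\A(\Z_0) = 0$, and in a basis adapted to $\x_0 \oplus \x_0^\perp$ a Schur complement computation gives the $(n-1) \times (n-1)$ block $t \projxop \Z_0 \projxop \matrixgt 0$ and a Schur complement in the $\x_0$ direction equal to $\|\x_0\|^2 + O(t) > 0$ for small $t$. With Slater's condition in hand, strong duality holds and Theorem~\ref{theorem:dual-certificate-equivalence} produces a dual certificate at $\X_0$.

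The main obstacle I anticipate is the strict-separation step: the cone $K + V$ is itself not pointed, as its lineality is exactly $V$, so one cannot directly extract a functional on $\Rsymn$ that is strictly positive on $K + V$. The quotient argument sidesteps this while preserving the requirement $\Z_0 \in V^\perp$, and without strict positivity one cannot conclude that $\projxop \Z_0 \projxop$ is positive definite (as opposed to merely positive semidefinite), which would be insufficient to produce a strictly feasible point.
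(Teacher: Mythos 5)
Your proof is correct, but it takes a different route from the one the paper actually writes out. The paper remarks in passing that Theorem~\ref{theorem:specific} ``can be proven by noting that the Theorem of the Alternative in Section 5.9.4 of \cite{BV2004} shows that Slater's condition holds,'' and your argument is essentially a self-contained execution of that remark: you verify directly that $K=\{\bfA\matrixgeq 0,\ \bfA\perp\X_0\}$ meets $V=\text{span}\{\bfA_i\}$ only at the origin, show $K+V$ is closed, extract a strictly positive functional $\Z_0\in V^\perp$ on the (pointed, closed) quotient cone, and use the Schur complement to exhibit the Slater point $\X_0+t\Z_0$. The steps all check out: the unboundedness dichotomy for closedness of $K+V$ is the standard argument and correctly uses $V\cap K=\{0\}$; pointedness of the image cone in $\Rsymn/V$ again follows from $V\cap K=\{0\}$ plus pointedness of the face $K$; and testing $\Z_0$ against $\y\y^*$ for $\y\perp\x_0$ does give positive definiteness of the lower-right block, which is exactly what the Schur complement needs (assuming, as is implicit, $\x_0\neq 0$). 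The paper's formal proof instead subsumes Theorem~\ref{theorem:specific} into Theorem~\ref{theorem:general}: under your hypothesis the completeness condition \eqref{complete-condition} holds vacuously, and the dual certificate is obtained via the Moreau--Rockafellar theorem and the subdifferential-additivity machinery of Lemma~\ref{lemma:subgradient-additivity}. Your route is more elementary and self-contained for this special case and yields the stronger conclusion that Slater's condition itself holds; the paper's route buys uniformity, since the same argument covers the degenerate situations of Theorem~\ref{theorem:general} where strict feasibility genuinely fails and no Slater point exists.
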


\noindent This theorem will be proved as a special case of Theorem \ref{theorem:general}.  It can also be proved by noting that the Theorem of the Alternative in Section 5.9.4 of  \cite{BV2004} shows that Slater's condition holds.

For some measurement matrices $\bfA_i$ of practical interest, Theorem \ref{theorem:specific} is applicable.  Consider the compressive phase retrieval problem \eqref{compressive-phaselift}.  We adopt the model of  \cite{LV2012} and consider the case where $\{ \z_i\}_{i = 1 \ldots m}$ are i.i.d. Gaussian with $m \leq n$.  In this case, $\bfA_i = \zi \zi^t$.  The following corollary implies that a dual certificate exists at $\X_0 = \x_0 \x_0^t$ with probability 1.
\begin{corollary} \label{corollary:independence}
If $\X_0 = \x_0 \x_0^t$ minimizes \eqref{convex-problem} and $\{\bfA_i \x_0\}_{i=1}^m$ are independent, then there exists a dual certificate at $\X_0$.
\end{corollary}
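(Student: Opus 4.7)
The plan is to reduce Corollary \ref{corollary:independence} to Theorem \ref{theorem:specific} by showing that the linear independence hypothesis on $\{\bfA_i \x_0\}_{i=1}^m$ precludes the existence of a nonzero $\bfA \in \text{span}\{\bfA_i\}$ that is simultaneously positive semidefinite and orthogonal to $\X_0$. Once this preclusion is established, Theorem \ref{theorem:specific} directly produces the dual certificate.

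First, I would take an arbitrary $\bfA = \sum_{i=1}^m \lambda_i \bfA_i$ in the span and assume $\bfA \matrixgeq 0$ and $\bfA \perp \X_0$. Since $\X_0 = \x_0 \x_0^*$, the orthogonality condition reads
\begin{align}
0 = \langle \bfA, \x_0 \x_0^* \rangle = \x_0^* \bfA \x_0. \nonumber
\end{align}
Combined with $\bfA \matrixgeq 0$, this forces $\bfA \x_0 = 0$, by the implication in \eqref{psd-property} applied with $\q = \x_0$ and $\X = \bfA$. Expanding this in the basis $\{\bfA_i\}$ gives $\sum_{i=1}^m \lambda_i (\bfA_i \x_0) = 0$. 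The hypothesis that the vectors $\{\bfA_i \x_0\}_{i=1}^m$ are linearly independent then forces $\lambda_i = 0$ for every $i$, so $\bfA = 0$.

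Thus the only element of $\text{span}\{\bfA_i\}$ that is PSD and orthogonal to $\X_0$ is the zero matrix, and Theorem \ref{theorem:specific} applies to yield a dual certificate at $\X_0$. The only mild subtlety is the step $\x_0^* \bfA \x_0 = 0 \Rightarrow \bfA \x_0 = 0$, which is not a calculation but the standard PSD fact already isolated in \eqref{psd-property}; nothing else in the argument requires more than rewriting definitions, so I do not expect any real obstacle.
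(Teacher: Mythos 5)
Your proposal is correct and follows essentially the same route as the paper's own proof: apply \eqref{psd-property} with $\X = \bfA$ and $\q = \x_0$ to get $\bfA\x_0 = 0$, expand in the $\bfA_i$, invoke linear independence of $\{\bfA_i\x_0\}$ to conclude $\bfA = 0$, and then cite Theorem \ref{theorem:specific}. No issues.
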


We now present a more general sufficient condition for dual certificate existence.  
If there is a positive-semidefinite measurement matrix $\bfA$ that is orthogonal to $\X_0$, then \eqref{psd-property} provides additional constraints on $\X$ that may or may not be implied by the linear constraints $\A(\X)=\bfb$ alone.  For any $\q\in \range(\bfA)$  and for any $\y$, all feasible $\X$ satisfy  $\langle \X, \y \otimes \q \rangle = 0$.  If these measurement matrices are included in $\A$, then we consider the set of measurements to be complete.  Recalling the definition that $S := \left \{ \sum_i \lambda_i \bfA_i + \Q \mid \Q \matrixleq 0, \Q \perp \X_0 \right \}$, we say that $\A$ is complete at $\X_0 = \x_0 \x_0^t$ if the following condition holds.
\begin{align}
&\text{Completeness condition:} \notag\\
&\text{If } \bfA = \A^* \blambda \matrixgeq 0, \bfA \perp \x_0 \x_0^t \text{, then } \y \otimes \q \in S \text{ $\forall$ } \y \text{ and $\forall$ $\q \in  \range(\bfA)$.} \label{complete-condition}
\end{align}
Note that for this condition, it suffices, but is not necessary, that $\y \otimes \q$ belongs to $\range(\A^*)= \Span\{\bfA_i\}$.  Technically, the condition only requires that $\y \otimes \q$ differs from this range by something negative-semidefinite.  For ease of exposition, we will sometimes say that $S$ is complete at $\X_0$ if \eqref{complete-condition} holds.   The completeness condition can equivalently be written as
\begin{align}
\text{If }\q \q^t \in S \text{ and } \q \perp \x_0, \text{ then }  \y \otimes \q \in S \text{ for all } \y.
\end{align}

The main theorem is as follows.  

\begin{theorem}\label{theorem:general}
Let $\X_0 = \x_0 \x_0^t$ minimize \eqref{convex-problem}.  If  $\A$ satisfies the completeness condition \eqref{complete-condition}
then strong duality holds and a dual certificate exists at $\X_0$.
\end{theorem}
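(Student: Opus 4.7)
The plan is to reduce \eqref{convex-problem} to an equivalent program supported on a subspace of $\R^n$, apply Theorem~\ref{theorem:specific} to the reduced program, and lift the resulting dual certificate back using the completeness condition. Set $K := \{\A^*\blambda : \A^*\blambda \matrixgeq 0,\ \A^*\blambda \perp \X_0\}$ and let $V$ be the linear span of the ranges of the matrices in $K$. Every $\bfA \in K$ has $\bfA\x_0 = 0$, so $V \subseteq \x_0^\perp$ and $\x_0 \in V^\perp$; by \eqref{psd-property}, every feasible $\X$ satisfies $V \subseteq \ker\X$. Fix $U \in \R^{n\times r}$ whose columns form an orthonormal basis of $V^\perp$, so $UU^* = \projvp$, and set $\tilde\bfA_i := U^*\bfA_i U$, $\tilde f(\tilde\X) := f(U\tilde\X U^*)$, and $\tilde\X_0 := U^*\X_0 U = \tilde\x_0\tilde\x_0^*$ with $\tilde\x_0 := U^*\x_0$. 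Then $\X \mapsto U^*\X U$ is a bijection between the feasible sets of \eqref{convex-problem} and of the reduced program $\min \tilde f(\tilde\X)$ subject to $\tilde\X \matrixgeq 0$ and $\<\tilde\X,\tilde\bfA_i\>=b_i$, and $\tilde\X_0$ minimizes the reduced program.

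The key step is to prove that the reduced program has no nonzero $\tilde\bfA \in \text{span}\{\tilde\bfA_i\}$ with $\tilde\bfA\matrixgeq 0$ and $\tilde\bfA\perp\tilde\X_0$, so that Theorem~\ref{theorem:specific} applies. Any such $\tilde\bfA$ equals $U^*\bfA U$ for $\bfA := \A^*\blambda$ satisfying $\projvp\bfA\projvp\matrixgeq 0$ and $\bfA\perp\X_0$. Expanding via $\I = \projvp + \proj{V}$ shows that $\bfA - \projvp\bfA\projvp$ lies in $W := \{M\in\Rsymn : \projvp M\projvp = 0\} = \text{span}\{\y\otimes\q : \y\in\R^n,\ \q\in V\}$. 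By completeness $W \subseteq S$, so $\projvp\bfA\projvp = \bfA - (\bfA - \projvp\bfA\projvp) \in S$; write $\projvp\bfA\projvp = \A^*\blambda' + \Q'$ with $\Q'\matrixleq 0$ and $\Q'\x_0 = 0$. Then $\A^*\blambda' = \projvp\bfA\projvp - \Q'$ is positive semidefinite and orthogonal to $\X_0$, hence belongs to $K$, so its range lies in $V$. The $V^\perp\times V^\perp$ block of the identity $\projvp\bfA\projvp - \Q' = \A^*\blambda'$ therefore reads $\projvp\bfA\projvp = \projvp\Q'\projvp$: the left side is $\matrixgeq 0$ while the right is $\matrixleq 0$ as a principal block of $\Q'$, so both vanish and $\tilde\bfA = 0$. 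This step is the crux: it uses that completeness puts the \emph{entire} subspace $W$ into $S$, not merely its generators $\y\otimes\q$, so that $\projvp\bfA\projvp$ itself admits a decomposition in $S$ even when $\bfA$ is indefinite on $\R^n$.

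Theorem~\ref{theorem:specific} then yields a reduced dual certificate $(\tilde\blambda,\tilde\Q)$: $\sum_i \tilde\lambda_i\tilde\bfA_i + \tilde\Q = -\tilde Y_f$ for some $\tilde Y_f \in \partial\tilde f(\tilde\X_0)$, with $\tilde\Q\matrixleq 0$ and $\tilde\Q\tilde\x_0 = 0$. Since $f$ is continuous, the subdifferential chain rule gives $\partial\tilde f(\tilde\X_0) = U^*\partial f(\X_0)U$, so pick $Y_f\in\partial f(\X_0)$ with $U^*Y_f U = \tilde Y_f$. Set $\Q_0 := U\tilde\Q U^*$ and $M := Y_f + \A^*\tilde\blambda + \Q_0$; then $U^*MU = \tilde Y_f + \sum_i \tilde\lambda_i\tilde\bfA_i + \tilde\Q = 0$, so $M\in W\subseteq S$, and also $-M\in W\subseteq S$ since $W$ is a subspace. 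Writing $-M = \A^*\mu + \Q_1$ with $\Q_1\matrixleq 0$ and $\Q_1\x_0 = 0$ and setting $\blambda := \tilde\blambda + \mu$, $\Q := \Q_0 + \Q_1$, one has $\Q\matrixleq 0$, $\Q\x_0 = 0$, and
\[
\A^*\blambda + \Q \;=\; \A^*\tilde\blambda + \Q_0 + (\A^*\mu + \Q_1) \;=\; \A^*\tilde\blambda + \Q_0 - M \;=\; -Y_f \;\in\; -\partial f(\X_0),
\]
so $(\blambda,\Q)$ is a dual certificate at $\X_0$; strong duality then follows from Theorem~\ref{theorem:dual-certificate-equivalence}.
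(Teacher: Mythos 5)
Your proof is correct, but it takes a genuinely different route from the paper. The paper rewrites \eqref{convex-problem} as the unconstrained minimization of $f + \indposaxb$, applies the Moreau--Rockafellar theorem, and then invokes Lemma~\ref{lemma:subgradient-additivity}, which shows that the completeness condition \eqref{complete-condition} is \emph{equivalent} to the additivity $\partial \indposaxb(\X_0) = \partial \indpos(\X_0) + \partial \indaxb(\X_0)$; that lemma is in turn proven by a separating-hyperplane construction that requires the closedness of $S$ (Lemmas~\ref{lemma:closed} and~\ref{lemma:closed-no-perp}) and the perturbation criterion of Lemma~\ref{lemma:positive-perturbation}. Your argument is instead a facial-reduction argument in the spirit of the minimal-cone regularizations the paper cites: you restrict to $V^\perp$, verify that the reduced program satisfies the hypothesis of Theorem~\ref{theorem:specific}, and use $W \subseteq S$ (which does follow from \eqref{complete-condition}, since $K$ is closed under addition, ranges of sums of PSD matrices add, and $S$ is a convex cone containing $\pm(\y\otimes\q)$ for all $\q \in V$) to transport the certificate back; the crux — showing $\projvp\bfA\projvp \matrixgeq 0$ forces $\projvp\bfA\projvp = \projvp\Q'\projvp \matrixleq 0$ and hence $\tilde\bfA = 0$ — is sound. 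What your route buys is the complete avoidance of the closedness lemmas and the separating-hyperplane machinery, at the cost of not establishing the equivalence in Lemma~\ref{lemma:subgradient-additivity} (which the paper reuses for Theorem~\ref{theorem:necessity}). One logical point you must make explicit: you invoke Theorem~\ref{theorem:specific} as a black box, but in the paper that theorem is proven \emph{as a special case of} Theorem~\ref{theorem:general}, so your argument is only non-circular if you rely on the independent proof of Theorem~\ref{theorem:specific} via the Theorem of the Alternative and Slater's condition, which the paper sketches but does not write out; you should state that this is the version you are using.
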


Roughly, the theorem states that if all of the linear measurements implied by $\X \matrixgeq 0$ and $\mathcal{A}(\X) = \bfb$ are included, then strong duality holds and a dual certificate exists.  

\subsubsection{Completion process} \label{section:completion-process}

Some programs of form \eqref{convex-problem} do not satisfy the completeness condition \eqref{complete-condition}.  Hence, a dual certificate may fail to exist at minimizers.  For such problems, there exists an equivalent program which satisfies \eqref{complete-condition}.  It can be constructed by a process that completes the measurement matrices in $\A$.  That is, an optimality certificate for any program \eqref{convex-problem} can be expressed as a dual certificate for the problem augmented with linear constraints implied by $\X\matrixgeq0$ and $\A(\X)=\bfb$. 

\begin{corollary} \label{corollary:completion}
If $\X_0 = \x_0 \x_0^t$ minimizes \eqref{convex-problem}, then there exists an equivalent equivalent problem 
\begin{align}
\min f(\X) \text{ such that } \X \matrixgeq 0, \tilde{\A}(\X) = \tilde{\bfb}
\end{align}
such that there exists a dual certificate at $\X_0$.  This problem is equivalent to \eqref{convex-problem} in the sense that the sets $\{ \X \matrixgeq 0, \A(\X) = \bfb \}  \ and\  \{ \X \matrixgeq 0, \tilde{\A}(\X) = \tilde{\bfb} \}$ are equal.
\end{corollary}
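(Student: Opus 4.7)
The plan is to augment the measurements iteratively with linear equalities implied by $\X \matrixgeq 0$ and $\A(\X) = \bfb$ via property \eqref{psd-property}, until the completeness condition \eqref{complete-condition} holds, and then invoke Theorem \ref{theorem:general}. Set $\A^{(0)} = \A$, $\bfb^{(0)} = \bfb$, and $V_0 = \{0\}$. For $k \geq 0$, inductively let
\[
V_{k+1} = \text{span}\bigl\{\text{Range}(\mathbf{B}) : \mathbf{B} = (\A^{(k)})^*\blambda,\ \mathbf{B} \matrixgeq 0,\ \mathbf{B} \perp \X_0\bigr\},
\]
observing that \eqref{psd-property} forces $\mathbf{B}\x_0 = 0$, so $V_{k+1} \subseteq \x_0^\perp$. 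Form $(\A^{(k+1)}, \bfb^{(k+1)})$ by appending to $\A^{(k)}$ the equations $\langle \X, \y \otimes \q \rangle = 0$ for $\q$ ranging over a basis of $V_{k+1}$ and $\y$ over a basis of $\R^n$; each right-hand side equals $0$ since $\q \perp \x_0$ makes $\y \otimes \q \perp \X_0$.

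I would then check two things. First, each appended constraint is a genuine consequence of the previous constraints, so the feasible set and the minimizer $\X_0$ are preserved at every step: for any feasible $\X$, $\langle \X, \mathbf{B}\rangle = \langle \blambda, \bfb^{(k)}\rangle = \langle \X_0, \mathbf{B}\rangle = 0$, and applying \eqref{psd-property} to the PSD matrix $\mathbf{B}$ yields $\X\q = 0$ for every $\q \in \text{Range}(\mathbf{B})$, hence $\langle \X, \y \otimes \q\rangle = 0$. Second, the monotone chain $V_k \subseteq V_{k+1} \subseteq \x_0^\perp$ must stabilize at some subspace $V$ after at most $n-1$ steps; let $(\tilde{\A}, \tilde{\bfb})$ denote the final augmented measurements and $\tilde{S}$ the cone from \eqref{s:definition} attached to them.

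Completeness \eqref{complete-condition} for the augmented problem is essentially built into the stopping criterion. Any $\mathbf{B} = \tilde{\A}^*\blambda$ that is PSD and orthogonal to $\X_0$ must satisfy $\text{Range}(\mathbf{B}) \subseteq V$, for otherwise a further iteration would strictly enlarge $V$; and by construction every $\y \otimes \q$ with $\q \in V$ already lies in $\text{span}\{\tilde{\bfA}_i\} \subseteq \tilde{S}$. Theorem \ref{theorem:general} then supplies a dual certificate at $\X_0$ for $(\tilde{\A}, \tilde{\bfb})$, which is the conclusion of the corollary.

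The main obstacle is the bookkeeping around stabilization: one must confirm that the inductive definition of $V_{k+1}$ captures precisely the PSD obstructions present in the current augmented span, and that the uniform bound $V_k \subseteq \x_0^\perp$ is strong enough to force termination in finitely many steps. Once these points are verified, completeness of the final cone is an immediate consequence of the stopping criterion, and Theorem \ref{theorem:general} closes the argument.
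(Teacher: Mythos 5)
Your proposal is correct and follows essentially the same route as the paper: iteratively append the implied constraints $\langle \X, \y \otimes \q\rangle = 0$ arising from PSD matrices in the measurement span orthogonal to $\X_0$, verify they preserve the feasible set via \eqref{psd-property}, and invoke Theorem \ref{theorem:general} once the completeness condition \eqref{complete-condition} holds. The only (harmless) difference is in the termination bookkeeping: you track the monotone chain of subspaces $V_k \subseteq \x_0^\perp$, giving a bound of $n-1$ iterations, whereas the paper argues that $\dim(\mathrm{span}\{\bfA_i\})$ strictly increases at each nontrivial step.
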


The following procedure outlines a theoretical process to complete the set of measurement matrices $\{\bfA_i\}$ in order to  satisfy the completeness condition \eqref{complete-condition}.  Given $\A$ consisting of measurement matrices $\{\bfA_i\}$, build $\tilde{\A}$ as follows:

\begin{enumerate}
\item Consider all $ \bfA\matrixgeq 0, \bfA \in \Span \{\bfA_i\}, \langle \bfA, \X_0 \rangle = 0$.
\item Write each $\bfA = \sum_k c_k \q_k \q_k^t$ with $c_k>0$.
\item For every $j$, if $\q_k \otimes \e_j \notin \Span \{\bfA_i\}$, append $\q_k \otimes \e_j$ to $\{\bfA_i\}$.  %.
\item Repeat until $\{\bfA_i\}$ remains unchanged.
\end{enumerate}
Let $\tilde{\A}$ be the operator corresponding to $\{\bfA_i\}$ upon termination of this process.  Note that step 3 corresponds to appending $\langle \X, \q_k \otimes \e_j \rangle = 0$ to $\A(\X) = \bfb$.  
This process will produce an $\A$ satisfying \eqref{complete-condition}, and it will terminate after finitely many repetitions because  $\text{dim}(\Span\{\bfA_i\} )$ increases at each repetition.  Because the resulting $\tilde{\A}$ will satisfy \eqref{complete-condition}, we apply Theorem \ref{theorem:general} and have thus proven Corollary \ref{corollary:completion}.  The semidefinite feasibility problem implicit in the first step is of unknown computational complexity \cite{Ramana1995}.  Hence, this procedure is of limited computational use.  See \cite{CSW2013} for computational preprocessing and regularization of   semidefinite programs that fail Slater's condition.

\subsubsection{Weak necessity of the completeness condition}

If the measurement matrices fail to satisfy the completeness condition \eqref{complete-condition}, a particular problem of form \eqref{convex-problem} may or may not have a dual certificate at a minimizer $\X_0$.  Nonetheless, there is a problem for which a dual certificate does not exist.  

\begin{theorem} \label{theorem:necessity}
Fix $\X_0 = \x_0 \x_0^t$ and the matrices $\{\bfA_i\}_{i=1}^m$.  If $\A$ does not satisfy the completeness condition \eqref{complete-condition} at $\X_0$, there exists a convex problem \eqref{convex-problem} such that $\X_0$ is a minimizer and a dual certificate does not exist at $\X_0$.
\end{theorem}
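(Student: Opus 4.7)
The plan is to construct a convex continuous $f$ for which the given data $\{\bfA_i\}$ together with $\bfb = \A(\X_0)$ make $\X_0$ a minimizer of \eqref{convex-problem}, yet the KKT conditions \eqref{optimality-condition}-\eqref{slackness-condition} admit no solution. The failure of \eqref{complete-condition} furnishes $\blambda \in \R^m$, a vector $\y$, and a vector $\q$ such that $\bfA := \A^*\blambda \matrixgeq 0$, $\bfA \perp \X_0$, $\q \in \text{Range}(\bfA)$, and $\Y_0 := \y \otimes \q \notin S$. I would use this $\Y_0$ to drive the construction.

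First I would verify that $\langle \X, \Y_0 \rangle = 0$ for every feasible $\X$. Since $\A(\X) = \A(\X_0) = \bfb$ and $\bfA \perp \X_0$,
\[ \langle \X, \bfA \rangle = \langle \A(\X), \blambda \rangle = \langle \A(\X_0), \blambda \rangle = \langle \X_0, \bfA \rangle = 0. \]
Combined with $\X \matrixgeq 0$ and $\bfA \matrixgeq 0$, property \eqref{psd-property} then forces $\X \q = 0$, so $\langle \X, \y \otimes \q \rangle = 0$.

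Next I would choose $f(\X) = \tfrac{1}{2}\|\X - \X_0 - \Y_0\|_{\text{F}}^2$, which is strongly convex and continuous with $\partial f(\X_0) = \{-\Y_0\}$. Expanding the square and invoking $\langle \X - \X_0, \Y_0 \rangle = 0$ on the feasible set gives
\[ f(\X) - f(\X_0) = \tfrac{1}{2}\|\X - \X_0\|_{\text{F}}^2 \geq 0, \]
so $\X_0$ minimizes the resulting instance of \eqref{convex-problem}. A dual certificate $(\blambda', \Q)$ would satisfy $\A^*\blambda' + \Q = \Y_0$ with $\Q \matrixleq 0$ and $\Q \perp \X_0$, i.e., $\Y_0 \in S$, contradicting the choice of $\Y_0$.

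The only nontrivial step is the identity $\langle \X, \Y_0 \rangle = 0$ on the feasible set; once that is in hand the quadratic above is essentially the simplest objective for which $-\partial f(\X_0)$ is a singleton lying outside $S$, which is precisely what obstructs the existence of a dual certificate.
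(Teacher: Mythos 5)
Your proposal is correct and takes essentially the same approach as the paper: both construct an objective whose subdifferential at $\X_0$ is the singleton $\{-\y\otimes\q\}$ with $\y\otimes\q\notin S$, and both establish that $\X_0$ is a minimizer by showing every feasible $\X$ is orthogonal to $\y\otimes\q$. The differences are cosmetic: the paper uses the linear objective $\langle -\y\otimes\q,\X\rangle$ and obtains the orthogonality by first showing $\q\q^*\in S$, while you use a quadratic objective and derive $\X\q=0$ directly from $\langle\X,\A^*\blambda\rangle=\langle\X_0,\A^*\blambda\rangle=0$ together with \eqref{psd-property}.
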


This weak form of necessity of the completeness condition arises  because of an equivalence between completeness and the additivity of subgradients of indicator functions over the constraints, as proven in Lemma \ref{lemma:subgradient-additivity}.

\subsection{Discussion} \label{section:discussion}

The important message of this paper is that orthogonal measurements with respect to positive-semidefinite matrices can give rise to situations where dual certificates do not exist (and strong duality does not hold) for semidefinite programs.    If a semidefinite program involves such measurements, there are additional measurements that should be included when building a dual certificate.  For example, if $\A(\X) = \bfb$ includes the measurement $\langle \X, \q \q^t \rangle = 0$, then $\langle \X, \q \e_j^t + \e_j \q^t \rangle = 0$ should be appended for all $j$, unless they are already implied by $\A(\X) = \bfb$.  

The sparsest element problem provides an important example of when care must be taken in posing a semidefinite relaxation.  As discussed in Section \ref{sec:motivating-examples}, one semidefinite relaxation of the task of finding the sparsest unit vector in a subspace $W$ is
\begin{align}
\min \|\X\|_1 \text{ s.t. } \X \matrixgeq 0, \langle \X, \zi \zi^t \rangle = 0, \trace(\X) = 1, \label{subspace-lifted-vXv}
\end{align}
where $\{ \z_j \}$ forms a basis of $W^\perp$.  As proven in Example \ref{example:sparsest}, a dual certificate can not exist at any $\x_0 \x_0^t$ such that $\x_0$ has nonconstant magnitudes on its support.  Hence, it is futile to attempt to prove that $\x_0 \x_0^t$ minimizes \eqref{subspace-lifted-vXv} for general sparse $\x_0$ by a dual certificate argument for \eqref{subspace-lifted-vXv} \textit{as written}.   We observe that the completeness condition \eqref{complete-condition} is violated for \eqref{subspace-lifted-vXv}.  If we follow the regularization procedure from Section \ref{section:completion-process}, we arrive at the equivalent program
\begin{align}
\min \|\X\|_1 \text{ s.t. } \X \matrixgeq 0, \langle \X, \zi \e_j^t + \e_j \zi^t \rangle = 0, \trace(\X) = 1. \label{subspace-lifted-Xve}
\end{align}
The program \eqref{subspace-lifted-Xve} satisfies the completeness condition \eqref{complete-condition}, and hence a dual certificate exists at a minimizer $\x_0 \x_0^t$.  For emphasis, we remark the the semidefinite programs \eqref{subspace-lifted-vXv} and \eqref{subspace-lifted-Xve} are equivalent, yet dual certificates exist at minimizers of the latter but not the former.

%If the problem is expressed this way, then a dual certificate will likely not exist.  The semidefinite and orthogonality measurements together imply that $\< X, \v_j \otimes \e_i\> = 0$.  If we instead
We remark that the completeness condition \eqref{complete-condition} is only a sufficient condition for existence of dual certificates.  For any particular problem, it may not be necessary.  It is, however, necessary for some particular problem, as per Theorem \ref{theorem:necessity}. 

We caution the reader of a subtlety of semidefinite programs of form \eqref{convex-problem}.  Consider such a program that satisfies strong duality.  Appending generic measurements to $\A(\X) = \bfb$ results in a program that may or may not satisfy strong duality.  This is because these additional measurements may cause the completeness condition \eqref{complete-condition} to be unsatisfied.  In this case, Theorem \ref{theorem:general} is not applicable, and the program may or may not satisfy strong duality as written.  To guarantee strong duality, a completion process like above is needed to ensure \eqref{complete-condition} holds.

We now place the completion procedure from Section \ref{section:completion-process} in context. The completion process could be viewed as regularizing the semidefinite program \eqref{convex-problem}.  Regularization is the modification of a semidefinite program or its dual in order to ensure strong duality.  One approach for this is a minimal cone regularization  \cite{BW1981, RTW1997}, where the conic constraint is modified.  Another approach is the extended Lagrange-Slater Dual (ELSD), which is an alternative to the Lagrangian dual \cite{Ramana1995, RTW1997}.  It can be constructed with polynomially many additional variables.  The regularization procedure in the present paper is different from these alternatives because it attains strong duality without changing the structure of the program.  The conic constraint and overall form remain the same, as only additional measurements are added.  The procedure can not be written down mechanically; hence, it is not suitable for numerical computations.  Instead, its simplicity in form makes it more useful for analytical constructions of dual certificates.

As stated, Theorem \ref{theorem:general} is proven when the minimizer $\X_0$ has rank one.  It is an interesting problem to see if a corresponding result holds in the case of low rank $\X_0$.  Because this paper is motivated by vector recovery problems that are lifted to rank-one matrix recovery problems, this extension is left for future work.

\subsection{Organization of this paper}
In Section \ref{section:notation}, we present the notation used throughout the paper.
In Section \ref{section:background-theorems}, we prove Theorems \ref{theorem:dual-certificate-sufficiency} and \ref{theorem:dual-certificate-equivalence} which are elementary results from convex optimization.
In Section \ref{section:proof-of-main-theorems}, we prove Theorems \ref{theorem:specific} and \ref{theorem:necessity} and Corollary \ref{corollary:independence}.  Corollary \ref{corollary:completion} was proven in Section \ref{section:completion-process}.  
The proofs of Theorems \ref{theorem:specific} and \ref{theorem:general} rely on technical lemmas concerning the additivity of subdifferentials of indicator functions, and on the closedness of $S$.  These lemmas are proven in Section \ref{section:lemmas}.

\subsection{Notation} \label{section:notation}
Let $\Rsymn$ be the space of symmetric, real-valued $n \times n$ matrices.  Matrices will be denoted with boldface capital letters, and vectors will be denoted with boldface lowercase letters.  Let $\X \matrixgeq 0$ denote that $\X$ is positive-semidefinite.   Let $\<\cdot, \cdot\>$ be the usual inner product for vectors and the Hilbert-Schmidt inner product for matrices.  Let $\x \otimes \y = \x\y^t + \y\x^t$ be the symmetric tensor product. For a subspace $V \subset \R^n$, let $V^\perp$ be the orthogonal complement with respect to the ordinary inner product.  Let $\ivp$ be the matrix corresponding to orthogonal projection of vectors onto $V^\perp$.  Let $\projvp \X =\ivp \X \ivp$ be the projection of symmetric matrices onto symmetric matrices with row and column spans in $V^\perp$. Let $\projxop$ be the special case in the instance where $V = \Span \{\x_0\}$.  In the special case where $\x_0$ is the coordinate basis element $\e_1$,  $\projxop \X$ is the projection of $\X$ to the lower-right $n-1 \times n-1$ block.

Let the indicator function for the set $\Omega$ be $\ind_\Omega(\X) = \begin{cases}0 & \text{ if } \X \in \Omega, \\ + \infty & \text{ if } \X \notin \Omega. \end{cases}
$

\section{Proofs of Background Theorems \ref{theorem:dual-certificate-sufficiency} and \ref{theorem:dual-certificate-equivalence}} \label{section:background-theorems}

Theorems \ref{theorem:dual-certificate-sufficiency} and \ref{theorem:dual-certificate-equivalence} follow from classical arguments in convex optimization.

\begin{proof}[Proof of Theorem \ref{theorem:dual-certificate-sufficiency}]
Consider a feasible $\X$.  Because $\A^* \blambda + \Q \in - \partial f(\X_0)$, 
\begin{align*}
f(\X) - f(\X_0) &\geq - \< \A^* \blambda + \Q, \X - \X_0 \> = -\<\Q, \X \> \geq 0,
\end{align*}
where the equality uses $\A(\X)=\A(\X_0)$ and $\Q \perp \X_0$, and the second inequality uses $\Q \matrixleq 0$ and $\X \matrixgeq 0$. 
\end{proof}

\begin{proof}[Proof of Theorem \ref{theorem:dual-certificate-equivalence}]
To prove (a) $\Rightarrow$ (b), we observe that \eqref{optimality-condition} implies $ 0 \in \partial_\X \mathcal{L}(\X_0, \blambda, \Q)$.  Hence, $\X_0$ minimizes $\mathcal{L}(\X, \blambda, \Q)$ over $\X$.  Hence $g(\blambda, \Q) = \mathcal{L}(\X_0, \blambda, \Q)$.   By slackness and feasibility of $\X_0$, $\mathcal{L}(\X_0, \blambda, \Q) = f(\X_0) = p^*$.  Hence $(\blambda, \Q)$ is dual optimal and strong duality holds.

To prove (b) $\Rightarrow$ (a), we observe that strong duality and dual optimality of $(\blambda, \Q)$ imply
\begin{align}
f(\X_0) = \inf_\X f(\X)  + \<\blambda, \A (\X) - b \> + \< \Q, \X \> \label{strong-duality-lagrangian}
\end{align}
In particular,$f(\X_0) \leq f(\X_0)  + \<\blambda, \A (\X_0) - b \> + \< \Q, \X_0 \>$, which implies $\< \Q, \X_0\> \geq 0$ by feasibility of $\X_0$.  By dual feasibility, $\Q \matrixleq 0$ and hence $\< \Q, \X_0 \> \leq 0$. We thus have $\Q \perp \X_0$.  The infimum in \eqref{strong-duality-lagrangian} is achieved by $\X_0$.  Hence, $0 \in \partial_\X \mathcal{L}(\X_0, \blambda, \Q)$, and we conclude $\A^* \blambda + \Q \in -\partial f(\X_0)$.
\end{proof}

\section{Proofs of Main Results} \label{section:proof-of-main-theorems}

In this section, we present the proofs of the main theorems and Corollary \ref{corollary:independence}.    

\subsection{Proof of Theorems \ref{theorem:specific} and \ref{theorem:general} and Corollary \ref{corollary:independence}}
Under the assumptions of Theorem \ref{theorem:specific}, the set $S$ trivially satisfies the completeness condition \eqref{complete-condition}.  The theorem is thus a special case of Theorem \ref{theorem:general}, and we will prove them together.

The strategy of proof involves rewriting \eqref{convex-problem} in an unconstrained form.  Existence
of a dual certificate is guaranteed when the subdifferentials of the sum of two indicator functions is the sum of their respective subdifferentials.  In Lemma \ref{lemma:subgradient-additivity}, we use a separating hyperplane argument to prove additivity of these subdifferentials under the condition \eqref{complete-condition}.

\begin{proof}[Proof of Theorems \ref{theorem:specific} and \ref{theorem:general}] We first rewrite the problem \eqref{convex-problem} without constraints.  $\X_0$ minimizes \eqref{convex-problem} if and only if $\X_0$ minimizes the problem 
\begin{align}
\min f(\X) + \indposaxb(\X), \label{convex-problem-no-constraints}
\end{align}
which, by convexity, happens if and only if
\begin{align}
0 \in \partial( f + \indposaxb ) (\X_0).  \label{zero-in-subgradient-sum}
\end{align}
By assumption, $f$ is continuous everywhere.  Hence, the Moreau-Rockafellar Theorem \cite{Rock1970} guarantees that 
\begin{align}
\partial(f + \indposaxb) (\X_0) = \partial f(\X_0) + \partial \indposaxb(\X_0). \label{moreau-rockefellar-consequence}
\end{align}
By  Lemma \ref{lemma:subgradient-additivity}, the completeness condition \eqref{complete-condition} is equivalent  to
\begin{align}
\partial \indposaxb (\X_0)= \partial \indpos (\X_0) + \partial \indaxb(\X_0). \label{additivity-of-subgradients}
\end{align}
We note that
\begin{align}
\partial \indpos(\X_0) &= \{\Q \mid \Q \matrixleq 0, \Q \perp \X_0 \},\\
\partial \indaxb(\X_0) &= \left \{ \A^* \blambda \right \},\\
\partial \indpos(\X_0) + \partial \indaxb(\X_0) &=  \left \{\A^* \blambda + \Q \mid \Q \matrixleq 0, \Q \perp \X_0 \right\} = S. \label{form-of-sum-of-subgradients}
\end{align}
We conclude there exists a dual certificate $(\blambda, \Q)$ by combining \eqref{zero-in-subgradient-sum}, \eqref{moreau-rockefellar-consequence}, \eqref{additivity-of-subgradients}, and \eqref{form-of-sum-of-subgradients}.
\end{proof}

The corollary follows from Theorem \ref{theorem:specific} because the independence assumption implies that there are no nontrivial linear combinations of measurement matrices that are positive-semidefinite and orthogonal to $\X_0$.
\begin{proof}[Proof of Corollary \ref{corollary:independence}]
Consider $\bfA \matrixgeq 0$, $\bfA = \sum_i \lambda_i \bfA_i$, $\bfA \perp \x_0 \x_0^t$.  By \eqref{psd-property},  $\bfA \x_0 = 0$.  Hence, $\sum_i \lambda_i \bfA_i \x_0 = 0$.  By independence of $\{\bfA_i \x_0\}$, $\lambda_i = 0$ for all $i$.  Hence the conditions of Theorem \ref{theorem:specific} are met and there exists a dual certificate at $\X_0$.  

\end{proof}

\subsection{Proof of Theorem \ref{theorem:necessity}}

Theorem \ref{theorem:necessity} provides a weak form of necessity for the completeness condition \eqref{complete-condition}.  If $-\partial f(\X_0)$ only contains matrices that are not of form $S$,  there will be no dual certificate.  When $S$ does not satisfy \eqref{complete-condition}, there is a matrix orthogonal to all feasible points, and we choose $f$ to have a gradient in the opposite direction.  This argument also plays an important role in the primary technical lemma establishing equivalence between the completeness condition and additivity of subgradients. 

\begin{proof}[Proof of Theorem \ref{theorem:necessity}]
If $S$ does not satisfy the completeness condition \eqref{complete-condition}, then there is a $\q \perp \x_0$ and a $\y$ such that $\q \q^t \in S$ and $\y \otimes \q \notin S$.  Consider the problem 
\begin{align}
\min \< -\y \otimes \q, \X \> \text{ subject to } \X \matrixgeq 0, \A(\X) = \A(\X_0). \label{example-necessity}
\end{align}
First, we show that $\X_0$ is a minimizer. Because $\q\q^t \in S$, $\q\q^t = \A^* \blambda + \Q$ for some $\Q \matrixleq 0$, $\Q \perp \X_0$.  Hence, 
$$
\<\X, \q\q^t\> = \<\X-\X_0, \q\q^t \> = \< \X-\X_0, \A^* \blambda + \Q \> = \<\X - \X_0, \Q\>  = \<\X, \Q\> \leq 0,
$$
where the third equality uses $\A(\X-\X_0) = 0$. Because all feasible $\X$ are positive-semidefinite, we observe that $\< \X, \q\q^t \> \geq 0$ and conclude that  $\X \perp \q \q^t$.  Hence, $ {\< -\y \otimes \q, \X \>  = 0}$ for any feasible $\X$.  Hence $\X_0$ is a minimizer.  

There is no dual certificate at $\X_0$ because $-\partial f(\X_0)$ contains the single element $\y \otimes \q \notin S.$
\end{proof}

\section{Technical Lemmas} \label{section:lemmas}

The main technical lemma establishes that the completeness condition is equivalent to the additivity of subgradients of a class of indicator functions.

\begin{lemma} \label{lemma:subgradient-additivity}
Let $\X_0 = \x_0 \x_0^t$ and $\A(\X_0)=b$.  The cone $S = \{ \A^*\blambda + \Q \mid \Q \matrixleq 0, \Q \perp \X_0\}$ satisfies the completeness condition \eqref{complete-condition} if and only if
\begin{align}
\partial \indposaxb (\X_0) = \partial \indpos (\X_0) + \partial \indaxb(\X_0). \label{subgradient-additivity}
\end{align}
\end{lemma}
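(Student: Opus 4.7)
The easy preliminary is that $\partial \indpos(\X_0) + \partial \indaxb(\X_0)$ already equals $S$ by \eqref{form-of-sum-of-subgradients}, and the inclusion $\partial \indpos(\X_0) + \partial \indaxb(\X_0) \subseteq \partial \indposaxb(\X_0)$ is the standard fact that the sum of subdifferentials lies in the subdifferential of the sum of convex functions. The lemma therefore reduces to showing that the completeness condition \eqref{complete-condition} is equivalent to $\partial \indposaxb(\X_0) \subseteq S$.

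For the $(\Leftarrow)$ direction, I would mimic the computation from the proof of Theorem \ref{theorem:necessity}. Assume the inclusion and fix $\bfA = \A^* \blambda \matrixgeq 0$ with $\bfA \perp \X_0$, a vector $\q \in \text{Range}(\bfA)$, and an arbitrary $\y$. For any feasible $\X$, $\<\X - \X_0, \bfA\> = \<\blambda, \A(\X) - \A(\X_0)\> = 0$; combined with $\X \matrixgeq 0$, $\bfA \matrixgeq 0$, and \eqref{psd-property} this forces $\bfA \X = 0$, hence $\X \q = 0$, and therefore $\<\X - \X_0, \y \otimes \q\> = 0$. Both $\pm \y \otimes \q$ then satisfy the subgradient inequality for $\indposaxb$ at $\X_0$, so they lie in $\partial \indposaxb(\X_0) \subseteq S$; in particular $\y \otimes \q \in S$, which is \eqref{complete-condition}.

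For the $(\Rightarrow)$ direction, I would argue by contraposition: given $\Y \notin S$, produce a feasible $\X$ with $\<\Y, \X - \X_0\> > 0$. Since $S$ is a closed convex cone (closed by Lemma \ref{lemma:closed}, convex by \eqref{s:definition}), the separating hyperplane theorem yields a matrix $\Z$ satisfying $\<\Y, \Z\> > 0$ and $\<\Z, \M\> \leq 0$ for every $\M \in S$. Testing this inequality against $\pm \A^* \blambda$ gives $\A(\Z) = 0$, and testing against arbitrary $\Q \matrixleq 0$ with $\Q \perp \X_0$ gives $\projxop \Z \projxop \matrixgeq 0$.

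The main obstacle is turning this $\Z$ into an honest feasible perturbation: the curve $\X_0 + t \Z$ need not be positive semidefinite even for small $t > 0$, because Lemma \ref{lemma:positive-perturbation} will demand not only $\projxop \Z \projxop \matrixgeq 0$ but also that the off-diagonal column $\Z \x_0$ lie in the range of this diagonal block. To repair this, I would invoke the completeness hypothesis to supply a PSD matrix $\M$ with $\M \x_0 = 0$, $\A(\M) = 0$, and range in $\x_0^\perp$ large enough to cover $\Z \x_0$. The second-order curve $\X(t) := \X_0 + t \Z + t^2 \M$ would then satisfy $\A(\X(t)) = \bfb$, be PSD for small $t > 0$ by Lemma \ref{lemma:positive-perturbation}, and obey $\<\Y, \X(t) - \X_0\> = t \<\Y, \Z\> + O(t^2) > 0$, contradicting $\Y \in \partial \indposaxb(\X_0)$. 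The hardest part, and the only place where completeness is actually used, is this construction of $\M$: translating the statement ``$\y \otimes \q \in S$ whenever $\q \in \text{Range}(\bfA)$'' into the existence of a PSD element of $\ker \A$ annihilating $\x_0$ with the desired range. I expect this step to go through a polarity/dual-cone calculation that identifies $\{\M \matrixgeq 0 : \M \x_0 = 0, \A(\M) = 0\}$ (in terms of its effective range in $\x_0^\perp$) precisely with the span of ranges of PSD $\bfA \in \A^*(\R^m)$ orthogonal to $\X_0$, which is the object controlled by completeness.
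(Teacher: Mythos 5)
Your reduction to the single inclusion $\partial \indposaxb(\X_0) \subseteq S$, your $(\Leftarrow)$ direction, and the opening of your $(\Rightarrow)$ direction (closedness of $S$ via Lemma \ref{lemma:closed}, separation to obtain $\Z$ with $\A(\Z)=0$, $\projxop \Z \matrixgeq 0$, $\<\Y,\Z\> > 0$) all agree with the paper. You have also correctly located the obstacle: $\X_0 + t\Z$ need not be PSD because $\Z\x_0$ may have a component in the kernel of $\projxop\Z$, which is exactly condition (b) of Lemma \ref{lemma:positive-perturbation}.

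The repair of that obstacle is where the whole proof lives, however, and yours is not carried out --- and the route you sketch toward it is incorrect. First, the existence of a corrector $\M \matrixgeq 0$ with $\M\x_0 = 0$, $\A(\M)=0$, and range covering the bad part of $\Z\x_0$ is precisely the hard content of this implication, and you only conjecture it. Second, the identification you propose is false: the effective range of $\{\M \matrixgeq 0 : \M\x_0 = 0,\ \A(\M)=0\}$ is not the span of ranges of the PSD elements of $\A^*(\R^m)$ orthogonal to $\X_0$. Take $n=3$, $\x_0=\e_1$, $\bfA_1 = \e_2\e_2^*$, $\bfA_2 = \e_1\otimes\e_2$, $\bfA_3 = \e_2\otimes\e_3$: completeness holds, the only PSD elements of the span orthogonal to $\X_0$ are nonnegative multiples of $\e_2\e_2^*$ (range $\text{span}\{\e_2\}$), while the only admissible correctors are nonnegative multiples of $\e_3\e_3^*$ (range $\text{span}\{\e_3\}$). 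The two subspaces you propose to identify are orthogonal; indeed the problematic component of $\Z\x_0$ lies in the orthogonal complement, inside $\x_0^\perp$, of the subspace controlled by completeness, so your polarity calculation points at the wrong target. The paper avoids second-order correctors altogether: it perturbs the separating functional itself, setting $\lam = \lamy + \eps\lamtilde$, where $\lamtilde$ is produced by a finite iteration over the rank-one matrices $\q\q^* \perp \X_0$ that are \emph{not} in $S$ so that $\<\lamtilde, \q\q^*\> > 0$ for every such matrix. Then any $\q\q^* \perp \lam$ with $\q \perp \x_0$ must lie in $S$, completeness \eqref{complete-condition} yields $\pm\x_0\otimes\q \in S$, and \eqref{hyperplane-property-against-line} forces $\lam \perp \x_0\otimes\q$ --- which is exactly hypothesis (b) of Lemma \ref{lemma:positive-perturbation}, so $\X_0 + \delta\lam$ is already feasible at first order. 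To complete your version you would need either to prove the existence of $\M$ with the correct target subspace (which is not obviously easier than the lemma itself) or to adopt a device like $\lamtilde$.
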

We recall that $S := \partial \indpos(\X_0) + \partial \indaxb(\X_0)$.  
One direction of the proof follows from the same argument as the proof of Theorem \ref{theorem:necessity}.  The other direction follows by showing  $\Y \not\in S \Rightarrow \Y \notin \partial \indposaxb(\X_0)$.  To show $\Y$ is not such a subgradient, we use a separating hyperplane argument.  That argument requires that $S$ is closed, as proven in Lemma \ref{lemma:closed}.  It also  hinges on Lemma \ref{lemma:positive-perturbation} which classifies when perturbations from a rank-one $\X_0$ remain positive-semidefinite.  The completeness condition is used to verify the assumptions of both these lemmas.

\begin{proof}[Proof of Lemma \ref{lemma:subgradient-additivity}]

First, we show  $\neg$\eqref{complete-condition} $\Rightarrow \neg$\eqref{subgradient-additivity}.  By $\neg$\eqref{complete-condition}, there exists $\q \perp \x_0$ such that $\q\q^t \in S$ but
$\y\otimes \q \notin S$ for some $\y$.  Following the calculation in the proof of Theorem \ref{theorem:necessity},
all feasible $\X$ are orthogonal to $\y \otimes \q$.  Hence, $\y \otimes \q \in \partial \indposaxb(\X_0)$, but $\y\otimes \q \notin S = \partial \indpos(\X_0) + \partial \indaxb(\X_0)$.

Next, we show \eqref{complete-condition} $\Rightarrow$ \eqref{subgradient-additivity}.  
%Note that $Y \in \partial \indposaxb(X_0)$
%is equivalent to 
%\begin{align}
% 0 \geq \langle Y, \Lambda \rangle \ \forall \ \Lambda \perp A_i,  X_0 + \delta \Lambda \matrixgeq 0 \\text{ for some } \delta > 0. \label{condition-subgradient-cone}
%\end{align}
One inclusion in \eqref{subgradient-additivity} is automatic:
\begin{align}
\partial \indposaxb (\X_0) = \partial ( \indpos + \indaxb) (\X_0)  \supset \partial \indpos(\X_0) + \partial \indaxb(\X_0).
\end{align}
To prove the other inclusion, we let $\Y \notin S = \partial \indpos(\X_0) + \partial \indaxb(\X_0)$ and show that $\Y \notin \partial \indposaxb(\X_0)$.  By definition of the subgradient, if there exists a feasible $\X$ such that $ {\langle \Y, \X - \X_0\rangle > 0}$, then $\Y \notin \partial \indposaxb(\X_0)$.    We will exhibit such an $\X$ by appealing to the separating hyperplane theorem.  

As we will prove in Lemma \ref{lemma:closed}, \eqref{complete-condition} implies that $S$ is closed.  The separating hyperplane theorem guarantees that we can separate $S$ from any $\Z \notin S$.  That is, for any $\Z \notin S$, there exists a $\lamz$ such that $\langle \lamz, \Z\rangle>0$ and $ \langle \lamz, \M \rangle \leq 0$ $\forall \ \M \in S$.  As a consequence, $\lamz$ satisfies
\begin{align}
\A(\lamz) &= 0, \label{hyperplane-property-first}\\
\langle \lamz, \Q \rangle &\leq 0 \text{ for all } \Q \matrixleq 0, \Q \perp \X_0, \label{hyperplane-property-second}\\
\< \lamz, \M \> &= 0 \text{ if }  \M \in S \text{ and } -\M \in S, \label{hyperplane-property-against-line}\\
\langle \lamz, \Z \rangle &> 0. \label{hyperplane-property-last}
\end{align}
We observe that \eqref{hyperplane-property-second} implies $\proj{\x_0^\perp} \lamz \matrixgeq 0$.
Let $B = \{ \q\q^t \mid \q \perp \x_0, \q\q^t \notin S\}$.  We will build a $\lamtilde$ satisfying the following properties:
\begin{align}
\A(\lamtilde) &= 0, \label{hyperplane-property-first-lamtilde}\\
\langle \lamtilde, \Q \rangle &\leq 0 \text{ for all } \Q \matrixleq 0, \Q \perp \X_0, \label{hyperplane-property-second-lamtilde}\\
\< \lamtilde, \M \> &= 0 \text{ if } \M \in S \text{ and } -\M \in S, \label{hyperplane-property-against-line-lamtilde}\\
\langle \lamtilde, \q\q^t \rangle &> 0 \text{ for all } \q\q^t\in B. \label{hyperplane-property-last-lamtilde}
\end{align}
We build $\lamtilde$ through the following process.  Begin with $\tilde{B} = B$.  
\begin{enumerate}
\item Choose $\q_i \q_i^t \in \tilde{B}$.
\item Let $\lam_{\q_i \q_i^t}$ be given from \eqref{hyperplane-property-first}--\eqref{hyperplane-property-last}.
\item Remove from $\tilde{B}$ any elements that are not orthogonal to $\q_i \q_i^t$.
\item Repeat until $\tilde{B}$ is empty.  
\end{enumerate}
This process terminates in a finite number of repetitions  because the set $\tilde{B}$ is restricted to a space of strictly decreasing dimension at each step.  It produces a finite sequence of $\q_i$ such that all elements of $B$ have positive inner product with $\q_i \q_i^t$ for some $i$.  
%  Choose a $\q_1 \q_1^t \in B$ and take a corresponding $\lam_{\q_1 \q_1^t}$, as guaranteed by \eqref{hyperplane-property-first}--\eqref{hyperplane-property-last}.  Restrict $B$ to a set $\tilde{B}$ containing only the elements that are orthogonal to $\lam_{\q_1 \q_1^t}$. All elements in $B \setminus \tilde{B}$ have a positive inner product with $\lam_{\q_1 \q_1^t}$.    Choose $\q_2 \q_2^t \in \tilde{B}$ and find $\lam_{\q_2 \q_2^t}$.  Further restrict $\tilde{B}$ to only the elements that are orthogonal to $\lam_{\q_2 \q_2^t}$.  Now, all elements in $B \setminus \tilde{B}$ have a positive inner product with $\lam_{\q_1 \q_1^t}$ or $\lam_{\q_2 \q_2^t}$ .  Repeat this process until $B$ is empty. The process will complete after a finite number of repetitions because the set $\tilde{B}$ is restricted to a space of strictly decreasing dimension at each step.
  Let $\lamtilde = \sum_i \lam_{\q_i \q_i^t}$.  We observe \eqref{hyperplane-property-first-lamtilde}--\eqref{hyperplane-property-against-line-lamtilde} hold due to \eqref{hyperplane-property-first}--\eqref{hyperplane-property-against-line}.  We have  \eqref{hyperplane-property-last-lamtilde} because     every element of $B$ has a nonnegative inner product with all  $\lam_{\q_i \q_i^t}$ and a strictly positive inner product with at least one $\lam_{\q_i \q_i^t}$.  

We now construct the feasible $\X$ such that $\langle \Y, \X - \X_0\rangle > 0$.  Let $\lam = \lamy + \eps \lamtilde$, where $\eps$ is small enough that $\<\lam, \Y\> >0$. Lemma \ref{lemma:positive-perturbation} guarantees that there exists $\delta > 0$ such that $\X_0 + \delta \lam \matrixgeq 0$.  We take $\X = \X_0 + \delta \lam$. Because $\X \matrixgeq 0$ and $\A(\lam) = 0$, $\X$ is feasible.  Additionally, $\<\Y, \X-\X_0\> > 0$ because $\<\lam, \Y \> >0$.   Hence, $\Y \notin \partial \indposaxb(\X_0)$. 

All that remains is to show that the conditions of Lemma \ref{lemma:positive-perturbation} hold.  The lemma states that if (a) $\proj{\x_0^\perp} \lam \matrixgeq 0$ and (b) $\lam \perp \q \q^t \text{ and } \q \perp \x_0 \Rightarrow \lam \perp \x_0 \otimes \q$, then there exists $\delta > 0$ such that $\X_0 + \delta \lam \matrixgeq 0$.  By \eqref{hyperplane-property-second} and \eqref{hyperplane-property-second-lamtilde}, (a) holds.  To show (b) holds, we consider a $\q\q^t \perp \lam$, $\q \perp \x_0$.    
By  \eqref{hyperplane-property-second} and \eqref{hyperplane-property-second-lamtilde}, we have that $\langle \lamy, \q \q^t \rangle \geq 0$ and $\langle \lamtilde, \q \q^t \rangle \geq 0$.  Hence $\lam \perp \q \q^t \Rightarrow \lamtilde \perp \q \q^t$.  Now \eqref{hyperplane-property-last-lamtilde} implies $\q \q^t \notin B$.  This implies that $\q \q^t \in S$.  By the completeness condition \eqref{complete-condition}, $ \x_0 \otimes \q \in S$ and $-\x_0 \otimes \q \in S$.  Hence, by \eqref{hyperplane-property-against-line} and \eqref{hyperplane-property-against-line-lamtilde}, $\lam \perp \x_0 \otimes \q$, and (b) holds.  

\end{proof}

The hyperplane separation argument above requires that $S$ be closed.  The following lemma reduces the closedness of $S\subset \Rsymn$ to an $n-1\times n-1$ case without the orthogonality constraint, which is proved in Lemma \ref{lemma:closed-no-perp}.

\begin{lemma} \label{lemma:closed}
If  $S = \left \{\sum_i \lambda_i \bfA_i + \Q  \mid \Q \matrixleq 0,  \Q \perp \X_0 \right \}$
satisfies the completeness condition \eqref{complete-condition} then $S$ is closed.
\end{lemma}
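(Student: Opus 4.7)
The plan is to change coordinates so that $\x_0 = \e_1$, reduce the closedness of $S$ to the closedness of a corresponding subspace-plus-negative-PSD-cone in $\mathcal{S}_{n-1}$ (which is exactly what Lemma \ref{lemma:closed-no-perp} is designed to handle), and then lift back to $S$ using the completeness condition. Under the normalization $\x_0 = \e_1$, property \eqref{psd-property} forces every $\Q \matrixleq 0$ with $\Q \perp \X_0$ to have vanishing first row and column, so $\Q$ is determined by a matrix $\tilde\Q \matrixleq 0$ in $\mathcal{S}_{n-1}$. Writing $\bfA_i = \begin{pmatrix} \alpha_i & \beta_i^* \\ \beta_i & C_i \end{pmatrix}$ in matching block form, the lower-right-block projection $\pi : \Rsymn \to \mathcal{S}_{n-1}$ satisfies $\pi(S) = \mathrm{span}\{C_i\}_{i=1}^m + \{-P : P \matrixgeq 0 \text{ in } \mathcal{S}_{n-1}\}$, precisely a set of the type treated by Lemma \ref{lemma:closed-no-perp}.

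To invoke Lemma \ref{lemma:closed-no-perp}, I would verify its hypothesis through a maximality argument. Define $V := \bigcup \{ \mathrm{Range}(\bfA) : \bfA \in \mathrm{span}\{\bfA_i\},\ \bfA \matrixgeq 0,\ \bfA \perp \X_0 \}$; because the sum of two such $\bfA$'s is again of the same form, $V$ is realized as the range of a single maximal $\bfA^*$. If a nonzero $\sum_i \lambda_i C_i \matrixgeq 0$ existed whose range extended beyond the image of $V$ in $\mathcal{S}_{n-1}$, then adding a large multiple of $\bfA^*$ (to dominate the $V$-$V$ block by Schur complement) and using elements of $S$ furnished by \eqref{complete-condition} to cancel remaining off-diagonal obstructions between $V$ and its complement would lift to a new PSD element of $\mathrm{span}\{\bfA_i\}$ orthogonal to $\X_0$ with range properly containing $V$, contradicting its maximality. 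Thus Lemma \ref{lemma:closed-no-perp} applies and $\pi(S)$ is closed.

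To lift closedness back to $S$, take $s^{(k)} \in S$ with $s^{(k)} \to s$. The first row and column of $s^{(k)}$ lie in the closed finite-dimensional subspace $T(\R^m)$, where $T(\lambda) := (\sum_i \lambda_i \alpha_i,\ \sum_i \lambda_i \beta_i)$, so the same holds for $s$ in the limit. Closedness of $\pi(S)$ furnishes $\pi(s) = \sum_i \mu_i C_i + \tilde\Q_\infty$ with $\tilde\Q_\infty \matrixleq 0$. The remaining task is to find a single $\lambda^* \in \R^m$ whose first-row/column data $T(\lambda^*)$ matches that of $s$ and for which $\pi(s) - \sum_i \lambda^*_i C_i \matrixleq 0$. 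The freedom in $\lambda^*$ is the affine space $\lambda^* + \ker T$, and completeness \eqref{complete-condition} is precisely what makes this flexibility sufficient: for $\y \in \R^n$ and $\q \in V$ with $\y \perp \x_0$, the element $\y \otimes \q \in S$ has zero first row/column, so any decomposition $\y \otimes \q = \sum_i \nu_i \bfA_i + \Q$ yields a vector $\nu \in \ker T$, and ranging over such $\y, \q$ generates enough directions in $\{\sum_i \nu_i C_i : \nu \in \ker T\}$ to absorb any discrepancy between $\mu$ and $\lambda^*$ into the negative semidefinite cone.

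The main obstacle is this last harmonization step: producing a single $\lambda^*$ that simultaneously matches the first row/column of $s$ and keeps the residual lower-right block negative semidefinite. Without completeness, $\ker T$ can be too small to bridge this gap, and $S$ genuinely fails to be closed, which is the mechanism underlying Example \ref{example:strong}; the completeness condition is calibrated exactly to supply the missing kernel directions.
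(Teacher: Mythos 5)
Your skeleton (normalize so $\x_0=\e_1$, pass to the lower-right block, invoke Lemma \ref{lemma:closed-no-perp}, lift back) is the same as the paper's, but two steps have genuine gaps, and both trace to one omission. First, you apply Lemma \ref{lemma:closed-no-perp} to $\pi(S)=\text{span}\{C_i\}_{i=1}^m+\{\tilde{\Q}\matrixleq 0\}$, but its hypothesis \eqref{complete-condition-no-perp} for \emph{this} set does not follow from completeness of $S$: if $\tilde{\q}\tilde{\q}^*=\sum_i\lambda_i C_i+\tilde{\Q}$, the corresponding $\sum_i\lambda_i\bfA_i$ may have a nonzero first row and column, which cannot be absorbed by any $\Q\matrixleq 0$ with $\Q\perp\X_0$ (such $\Q$ have vanishing first row and column); so $\tilde{\q}\tilde{\q}^*\in\pi(S)$ need not lift to an element of $S$, and \eqref{complete-condition} gives you nothing about it. Your ``maximality'' argument addresses a different property (absence of positive semidefinite elements in a projected span), which is an ingredient used \emph{inside} the proof of Lemma \ref{lemma:closed-no-perp}, not its hypothesis. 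Second, your final harmonization step --- producing a single $\lambda^*$ that matches the first row and column of $s$ while keeping $\pi(s)-\sum_i\lambda_i^* C_i\matrixleq 0$ --- is the actual crux of the lemma, and the claim that completeness ``generates enough directions to absorb any discrepancy'' is asserted without proof; note that the $\mu$ delivered by closedness of $\pi(S)$ comes with no control on $T(\mu)$.

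The paper closes both holes with one preliminary move you skipped: recombine the $\bfA_i$ so that $\bfA_1,\ldots,\bfA_\ell$ have linearly independent first columns and $\bfA_{\ell+1},\ldots,\bfA_m$ have zero first columns (hence, by symmetry, zero first rows). For a convergent sequence $\sum_i\lik\bfA_i+\Qk$ the first column then pins down $\lambda_1^{(k)},\ldots,\lambda_\ell^{(k)}$ and forces them to converge, so the residual sequence involves only the zero-first-column matrices and is \emph{exactly} a sequence in $\tilde{S}=\{\sum_{i>\ell}\lambda_i\tilde{\bfA}_i+\tilde{\Q}\mid\tilde{\Q}\matrixleq 0\}\subset\Rsym_{n-1}$; no harmonization is needed afterward. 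For this $\tilde{S}$ the hypothesis of Lemma \ref{lemma:closed-no-perp} \emph{does} follow from \eqref{complete-condition}, because membership of $\tilde{\q}\tilde{\q}^*$ in $\tilde{S}$ lifts to membership of the zero-padded rank-one matrix in $S$ (the relevant $\bfA_i$ have zero first rows and columns), and the independence of the first columns of $\bfA_1,\ldots,\bfA_\ell$ pushes the resulting $\y\otimes\q\in S$ back down to $\tilde{S}$. Your instinct about $\ker T$ points at exactly this: after recombination $\ker T=\text{span}\{\e_{\ell+1},\ldots,\e_m\}$, and the correct set to feed into Lemma \ref{lemma:closed-no-perp} is $\{\sum_i\nu_i C_i\mid\nu\in\ker T\}$ plus the negative semidefinite cone, not $\text{span}\{C_i\}_{i=1}^m$ plus that cone.
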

\begin{proof}[Proof of Lemma \ref{lemma:closed}]
Without loss of generality let $\X_0 = \e_1 \e_1^t$.  This can be seen by letting $\V$ be an orthogonal matrix with $\x_0/\|\x_0\|$ in the first column, and by considering the set $\V^t S \V$.

If necessary, linearly recombine the $\bfA_i$ such that the first column of $\bfA_1, \ldots, \bfA_\ell$ are independent and the first columns of the remaining $\bfA_{\ell+1}, \ldots, \bfA_m$ are zero.  

Consider a Cauchy sequence ${\Ak + \Qk \rightarrow \X}$, where $\Ak = \sum_{i=1}^m \lik \bfA_i$.  We will establish that $\X \in S$.   Because $\Qk \matrixleq 0$ and $\Qk \perp \e_1 \e_1^t$, $\Qk$ is zero in the first row and column.  Hence the first column of $\sum_{i=1}^\ell \lik \bfA_i$ converges to the first column of $\X$.  By independence, we obtain that $\lik$ converges to some $\liinf$ for each $1 \leq i \leq \ell$.
As a result, $$\sum_{i=\ell+1}^m \lik \bfA_i + \Qk \rightarrow \overline{\X},$$
where $\overline{\X} = \X - \sum_{i=1}^\ell \liinf \bfA_i$, and $\overline{\X}$ is zero in the first row and column.  

The problem has now been reduced to one of size $n-1 \times n-1$ without an orthogonality constraint, and Lemma \ref{lemma:closed-no-perp} can be used to complete the proof.  We now show that the condition of Lemma \ref{lemma:closed-no-perp} holds.  Let $\tilde{\bfA}_i$ be the lower-right $n-1 \times n-1$ submatrix of $\bfA_i$.  Let $\tilde{S} = \{ \sum_{i = \ell + 1}^m \lambda_i \tilde{\bfA}_i + \tilde{\Q} \mid \tilde{\Q} \matrixleq 0 \} \subset \Rsym_{n-1}$.  If $\tilde{\q} \tilde{\q}^t \in \tilde{S}$ then $\begin{pmatrix}0\\\tilde{\q}\end{pmatrix}\begin{pmatrix}0\\\tilde{\q}\end{pmatrix}^t \in S$.  By the completeness condition \eqref{complete-condition}, $\begin{pmatrix}0\\\tilde{\y}\end{pmatrix}\otimes\begin{pmatrix}0\\\tilde{\q}\end{pmatrix} \in S \ \forall \tilde{\y} \in \R^{n-1}$.  By independence of the first columns of $\bfA_1, \ldots, \bfA_\ell$, $\tilde{\y} \otimes \tilde{\q} \in \tilde{S}$.  The conditions of Lemma \ref{lemma:closed-no-perp} are met.  Hence, 
$\overline{\X} = \sum_{i = \ell + 1}^m \lambda_i^{(\infty)} \bfA_i + \Q^{(\infty)} \text{ with } \Q^{(\infty)} \matrixleq 0, \Q^{(\infty)} \perp \e_1 \e_1^t$.  We conclude $\X \in S$.  Thus,  $S$ is closed.  

\end{proof}

The closedness of $S$ above relies on the closedness of a lower dimensional $\tilde{S}$ without the orthogonality constraint.  Closedness is not automatic because it may be that $\sum_i \lik \bfA_i$ and $\Qk$ diverge separately but converge when added together.  We show that this pathology is impossible.  

\begin{lemma} \label{lemma:closed-no-perp}
The set $\tilde{S} = \left \{\sum_i \lambda_i \bfA_i + \Q  \mid \Q \matrixleq 0 \right \} \subset \Rsymn$ is closed if 
\begin{align}\q\q^t \in \tilde{S} \Rightarrow \y\otimes \q \in \tilde{S} \ \forall \y. \label{complete-condition-no-perp}
\end{align}
\end{lemma}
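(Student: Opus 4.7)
My plan is to induct on the matrix size $n$. Write $L = \text{span}\{\bfA_i\}$, so that $\tilde{S} = L + \{\Q : \Q \matrixleq 0\}$, and note that the base case $n = 1$ is immediate since $L$ is either $\{0\}$ or $\Rsym_1$. For the inductive step, I would distinguish two cases based on whether $L$ meets the positive semidefinite cone nontrivially.

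Suppose first that no nonzero element of $L$ is positive semidefinite. Given a Cauchy sequence $\bfA^{(k)} + \Q^{(k)} \to \X$ with $\bfA^{(k)} \in L$ and $\Q^{(k)} \matrixleq 0$, I would show $\|\bfA^{(k)}\|$ is bounded: otherwise, after passing to a subsequence $\bfA^{(k)}/\|\bfA^{(k)}\|$ converges to some unit-norm $\hat\bfA \in L$, and dividing the sum by $\|\bfA^{(k)}\|$ forces the normalized $\Q^{(k)}$ to converge to $-\hat\bfA$. Closedness of the negative semidefinite cone yields $-\hat\bfA \matrixleq 0$, i.e.\ $\hat\bfA \matrixgeq 0$, contradicting the case hypothesis. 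Once $\bfA^{(k)}$ is bounded, subsequential limits of $\bfA^{(k)}$ and $\Q^{(k)}$ exhibit $\X \in \tilde{S}$.

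The remaining case is that some nonzero $\hat\bfA \in L$ satisfies $\hat\bfA \matrixgeq 0$; I would choose one of maximal rank $r \geq 1$ and set $V = \text{Range}(\hat\bfA)$. The crux is to show that the subspace $T := \{\M : \projvp \M = 0\}$ is contained in $\tilde{S}$. For any $\q \in V$, choosing $t > 0$ small enough that $\hat\bfA - t\q\q^* \matrixgeq 0$ and writing $\q\q^* = \hat\bfA/t - (\hat\bfA - t\q\q^*)/t$ shows $\q\q^* \in \tilde{S}$. The completeness hypothesis \eqref{complete-condition-no-perp} then places $\y \otimes \q \in \tilde{S}$ for every $\y \in \R^n$ and $\q \in V$; these matrices, together with their negatives (obtained by replacing $\y$ with $-\y$), span $T$, so $T \subset \tilde{S}$.

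With $T \subset \tilde{S}$ and $\tilde{S}$ closed under addition, one has $\tilde{S} = \projvp^{-1}(\projvp \tilde{S})$, so it suffices to prove that $\projvp \tilde{S} = \projvp L + \{\bar\Q : \bar\Q \matrixleq 0\}$ is closed in the space of symmetric matrices on $V^\perp$. This reduced set is an instance of the same lemma in strictly smaller dimension $n - r$. I would verify it inherits the completeness hypothesis: if $\bar\q \in V^\perp$ and $\bar\q\bar\q^* \in \projvp \tilde{S}$, then lifting $\bar\q$ to $\R^n$ (zero on $V$), the resulting $\bar\q\bar\q^* \in \Rsymn$ differs from a preimage of its projection by an element of $T \subset \tilde{S}$, hence lies in $\tilde{S}$. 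Applying \eqref{complete-condition-no-perp} followed by $\projvp$ delivers $\bar\y \otimes \bar\q \in \projvp \tilde{S}$ for every $\bar\y \in V^\perp$, and the induction hypothesis completes the proof. I expect the main obstacle to be establishing $T \subset \tilde{S}$: this step couples the convex-combination argument for rank-one PSD matrices supported on $V$ with the completeness hypothesis, and without it the dimension-reducing identity $\tilde{S} = \projvp^{-1}(\projvp \tilde{S})$ is unavailable.
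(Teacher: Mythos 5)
Your argument is correct, and its core strategy coincides with the paper's: isolate a subspace $V$ of degenerate directions, use the completeness hypothesis to show that every symmetric matrix killed by $\projvp$ already lies in $\tilde{S}$ (so that $\tilde{S}=\projvp^{-1}(\projvp\tilde S)$), and then establish closedness of the projected set by a compactness/boundedness argument. The differences are organizational but worth noting. First, you take $V=\text{Range}(\hat\bfA)$ for a maximal-rank positive semidefinite $\hat\bfA$ in the span, whereas the paper takes $V=\text{span}\{\q \mid \q\q^*\in\tilde S\}$; these in fact coincide (any $\q\q^*\in\tilde S$ equals $\bfA+\Q$ with $\bfA=\q\q^*-\Q\matrixgeq 0$ in the span, and ranges of PSD elements of the span all sit inside the range of a maximal-rank one), but the paper's choice lets it argue in one shot that $\projvp\,\text{span}\{\bfA_i\}$ contains no nonzero PSD matrix, so a single reduction suffices. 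Your reduction does not guarantee this, so you compensate with an induction on dimension that may peel off several such subspaces; this is perfectly sound since each step strictly decreases $n$, and it requires the extra (correctly executed) check that the projected problem inherits hypothesis \eqref{complete-condition-no-perp}. Second, your Case 1 boundedness argument --- normalize a divergent $\bfA^{(k)}$, extract a limit $\hat\bfA$ of unit norm, and observe $-\hat\bfA\matrixleq 0$ by closedness of the negative semidefinite cone --- is a cleaner, more standard rendering of the paper's argument via compactness of the two unit spheres and the inner product attaining $-1$. Net effect: the paper buys a single-step reduction at the cost of a separate contradiction argument ruling out PSD elements of the projected span; you buy a more modular and elementary proof at the cost of iterating.
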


\begin{proof}[Proof of Lemma \ref{lemma:closed-no-perp}]
Consider a Cauchy sequence $\Ak + \Qk \rightarrow \X$, where $\Ak = \sum_i \lik \bfA_i$.  Our goal is to show that $\X \in \tilde{S}$.  Let $V = \Span \{\q \mid \q\q^t \in \tilde{S} \}$.  By \eqref{complete-condition-no-perp}, we note that
\begin{align}
\y \otimes \q \in \tilde{S}  \text{ for all } \y \in \R^n,\ \q \in V.  \label{yq-inclusion}
\end{align}
Recall that that $\projvp \X = \ivp \X \ivp$ is the projector of $\X$ onto matrices with row and column spans in $V^\perp$.  The inclusion \eqref{yq-inclusion} guarantees that for all $\X$,
\begin{align}
\X - \projvp \X &\in \tilde{S}, \label{difference-from-projection-in-span-plus} \\
\projvp \X - \X &\in \tilde{S}. \label{difference-from-projection-in-span-minus}
\end{align}
Taking the projection of the Cauchy sequence, we have
\begin{align}
 \projvp \Ak + \projvp \Qk \rightarrow \projvp \X. \label{proj-cauchy-sequence}
\end{align}
Either $\projvp \Ak$ are bounded or there is an unbounded subsequence.  We will show that the latter is impossible and that the former ensures $\X \in \tilde{S}$.  

We now show the impossibility of $\| \projvp \Ak \|_F \to \infty$ for any subsequence in $k$.  
If $\| \projvp \Ak \|_\text{F} \to \infty$, then $\frac{\| \projvp \Ak \|_\text{F}}{\| \projvp \Qk \|_\text{F}} \to 1$  and 
\begin{align}
\left \langle \frac{\projvp \Ak}{\| \projvp \Ak \|_\text{F}} , \frac{\projvp \Qk}{\|\projvp \Qk\|_\text{F} } \right \rangle \to -1 \text{ as } k \to \infty. \label{sequence-inner-product-minus-1}
\end{align}
The sets $\{\bfA \in \projvp \Span \bfA_i\} \cap \{ \|\bfA \|_\text{F} = 1 \}$ and $\{\Q \matrixleq 0\} \cap \{\|\Q\|_\text{F}=1\}$ are compact.  Hence $\langle \bfA, \Q \rangle$ achieves its minimum value over this set of arguments.  By the Cauchy-Schwarz inequality, the minimum value must be no smaller than $-1$.  As \eqref{sequence-inner-product-minus-1} exhibits a sequence approaching this value, the minimum achieved inner product is $-1$.  That is, there exists $\Q = -\projvp \sum_i \lambda_i \bfA_i$, where $\Q \matrixleq 0$, $\Q = \projvp \Q$, and $\|\Q\|_F=1$.  By applying \eqref{difference-from-projection-in-span-minus} to $\sum_i \lambda_i \bfA_i$, we see $-\Q - \sum_i \lambda_i \bfA_i \in \tilde{S}$.  As $S$ is a convex cone and $\sum_i \lambda_i \bfA_i  \in \tilde{S}$, we observe $-\Q \in \tilde{S}$.  As $-\Q \matrixgeq 0$ is nonzero, it has a positive-semidefinite rank-one component that is also in $\tilde{S}$.   Because  $\range(\Q) \subset V^\perp$, the vector generating this component belongs to $V^\perp$, which contradicts the definition of $V$.  We have thus shown that $\projvp \Ak$ is bounded.

We now show that the boundedness of $\projvp \Ak$ implies $\X \in \tilde{S}$.  By boundedness, there exists a convergent subsequence of $\projvp \Ak$.  As the projection of $\Span\{\bfA_i\}$ is closed, there exist $\liinf$ such that
\begin{align}
\projvp \sum_i \lik \bfA_i \to \projvp \sum_i \liinf \bfA_i. \label{proj-a-limit}
\end{align}
Hence, $\projvp \Qk$ also converges because the projection of the negative-semidefinite cone is closed.  That is, there is a $\Qinf$ such that
\begin{align}
\projvp \Qk \to \projvp \Qinf. \label{proj-q-limit}
\end{align}
Combining \eqref{proj-cauchy-sequence}, \eqref{proj-a-limit}, and \eqref{proj-q-limit}, we observe that 
\begin{align}
 \projvp \left( \X - \sum_i \liinf \bfA_i - \Qinf \right) = 0 \label{projvp-xaq-zero}
\end{align}
Using \eqref{projvp-xaq-zero} and applying \eqref{difference-from-projection-in-span-plus} to $\X - \sum_i \liinf \bfA_i - \Qinf $, we get that
\begin{align}
\X - \sum_i \liinf \bfA_i - \Qinf  \in \tilde{S}
\end{align}
As $S$ is a convex cone and $\sum_i \liinf \bfA_i + \Qinf \in \tilde{S}$, we conclude $\X \in \tilde{S}$.  

\end{proof}

The following lemma establishes a necessary and sufficient condition for when a symmetric perturbation from a positive-semidefinite rank-one matrix remains positive-semidefinite.    

\begin{lemma} \label{lemma:positive-perturbation}
Let $\X_0 = \x_0 \x_0^t \in \Rsymn$.  $\X_0 + \delta \lam \matrixgeq 0$ for some $\delta > 0$ if and only if  (a) $\projxop \lam \matrixgeq 0$ and (b) $ \lam \perp \q\q^t$ and $\q \perp \x_0 \Rightarrow \lam \perp \x_0 \otimes \q.
$

\end{lemma}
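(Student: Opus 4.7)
The plan is to reduce to block matrix form and then invoke the standard Schur complement characterization of positive semidefiniteness for symmetric block matrices with possibly singular blocks.

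First, by orthogonally conjugating by a matrix whose first column is $\x_0/\|\x_0\|$, and absorbing $\|\x_0\|^2$ into $\delta$, I would reduce without loss of generality to the case $\x_0 = \e_1$. Writing
\[
\lam = \begin{pmatrix} a & \bfb^* \\ \bfb & C \end{pmatrix},
\]
so that $\X_0 + \delta \lam = \begin{pmatrix} 1+\delta a & \delta \bfb^* \\ \delta \bfb & \delta C \end{pmatrix}$, I would observe that $\projxop \lam$ is exactly the lower-right block $C$, so condition (a) becomes $C \matrixgeq 0$. For condition (b), any $\q \perp \x_0$ has the form $\q = (0, \tilde\q)^*$, and then $\lam \perp \q\q^*$ reads $\tilde\q^* C \tilde\q = 0$, while $\lam \perp \x_0 \otimes \q$ reads $\bfb^* \tilde\q = 0$; assuming $C \matrixgeq 0$, the first is equivalent to $\tilde\q \in \ker C$, so (b) is exactly $\bfb \perp \ker C$, i.e., $\bfb \in \mathrm{Range}(C)$.

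Next I would apply the block-PSD characterization (Albert's theorem): a symmetric block matrix $\bigl(\begin{smallmatrix} \alpha & \bfb^* \\ \bfb & C \end{smallmatrix}\bigr) \matrixgeq 0$ iff $C \matrixgeq 0$, $\bfb \in \mathrm{Range}(C)$, and $\alpha - \bfb^* C^+ \bfb \geq 0$, where $C^+$ is the Moore--Penrose pseudoinverse. Applied here, $\X_0 + \delta\lam \matrixgeq 0$ is equivalent to $\delta C \matrixgeq 0$, $\delta \bfb \in \mathrm{Range}(\delta C)$, and $(1 + \delta a) - \delta \bfb^* C^+ \bfb \geq 0$. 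For the forward direction, if some $\delta > 0$ achieves PSD, the first two clauses immediately give (a) and (b). For the converse, (a) and (b) furnish the first two clauses for every $\delta > 0$, and the scalar inequality $1 + \delta(a - \bfb^* C^+ \bfb) \geq 0$ holds for all sufficiently small $\delta > 0$ because the constant term is $1$.

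The main technical obstacle is the singular case for $C$: the standard Schur complement identity requires invertibility, so I would invoke its generalization via the pseudoinverse, which is where the range condition $\bfb \in \mathrm{Range}(C)$ becomes essential and precisely matches condition (b). Everything else is routine block-matrix algebra together with the self-adjointness identity $\mathrm{Range}(C) = (\ker C)^\perp$ used to translate (b) into the needed form.
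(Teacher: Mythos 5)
Your proposal is correct. The setup is the same as the paper's: reduce to $\x_0 = \e_1$, write $\lam$ in block form, identify (a) with positive semidefiniteness of the lower-right block $C$ and (b) with the off-diagonal vector $\bfb$ being orthogonal to $\ker C$, and then argue via a Schur complement. Where you diverge is in which block you pivot on and which version of the Schur complement criterion you invoke. The paper pivots on the scalar $(1,1)$ entry $1+\delta\Lambda_{11}$, reducing the claim to $\lamxop - \frac{\delta}{1+\delta\Lambda_{11}}\brho\brho^* \matrixgeq 0$, and then must prove this by hand: it introduces $V = \mathrm{span}\{\q \mid \lamxop \perp \q\q^*\}$, establishes a uniform eigenvalue gap $\lamxop \matrixgeq \eps \I_{V^\perp}$ by a compactness argument, and uses (b) to place $\brho$ in $V^\perp$ so the rank-one perturbation can be absorbed for small $\delta$. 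You instead pivot on the (possibly singular) block $C$ and invoke the generalized Schur complement criterion with the Moore--Penrose pseudoinverse (Albert's theorem), under which the range condition $\bfb \in \mathrm{Range}(C)$ is exactly condition (b) and the remaining scalar inequality $1 + \delta(a - \bfb^* C^+ \bfb) \geq 0$ holds trivially for small $\delta$. Your route is cleaner and makes the role of (b) as a range condition transparent, at the cost of citing a less elementary (though entirely standard) block-PSD characterization; the paper's route is self-contained but needs the compactness argument to handle the degeneracy of $\lamxop$ that the pseudoinverse formulation absorbs automatically. Both directions of the equivalence check out in your version, including the point that the translation of (b) into $\tilde\q \in \ker C$ uses $C \matrixgeq 0$, which is available in both directions of the argument where you need it.
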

\begin{proof}
Without loss of generality, let $\x_0 = \e_1$ and $\X_0 = \e_1 \e_1^t$.  In this case $\projxop$ is the restriction to the lower-right $n-1\times n-1$ block.  Let $\lamxop \in \Rsym_{n-1}$ be that lower-right block of $\lam$.   Write the block form 
 $$
 \lam = \begin{pmatrix} \Lambda_{11} & \brhot^t \\ \brhot & \lamxop \end{pmatrix}.
 $$

First, we prove $\X_0 + \delta \lam \matrixgeq 0 \Rightarrow $ (a) and (b).  We immediately have (a) because $\X_0$ is zero on the lower-right subblock.  
To establish (b), we first consider the case where $1 + \delta \Lambda_{11} = 0$.  By $\X_0 + \delta \lam \matrixgeq 0$, we have that $\brhot=0$ and (b) holds.  Now, we consider the case that $1 + \delta \Lambda_{11} > 0$.  Further consider a $\q$ such that $\lam \perp \q\q^t$ and $\q \perp \e_1$.  Hence, $\q$ can be written as $\begin{pmatrix}0 \\ \tilde{\q}\end{pmatrix}$.  We observe that $\lamxop \perp \tilde{\q}\tilde{\q}^t$.  Using a Schur complement, 
\begin{align}
\text{if } 1+\delta \Lambda_{11} > 0, \text{ then  } \X_0 + \delta \lam \matrixgeq 0 \Leftrightarrow \lamxop - \frac{\delta}{1 + \delta \Lambda_{11}} \brhot \brhot^t \matrixgeq 0.   \label{schur-complement}
\end{align} By \eqref{schur-complement}, we obtain $\brhot^t \tilde{\q}=0$, which implies that $\lam \perp \e_1 \otimes \q$.  

Second, we prove (a) and (b) $\Rightarrow$ $\X_0 + \delta \lam \matrixgeq 0$ for some $\delta>0$.
 Assume (a) and (b) hold.  
Using the property \eqref{schur-complement} about Schur complements, it suffices to show
\begin{align}
 1 + \delta \Lambda_{11} > 0 \text{ and } \lamxop - \frac{\delta}{1 + \delta \lam_{11}} \brhot \brhot^t \matrixgeq 0. \label{schur-complement-condition}
\end{align} 
Let $V\subset \R^{n-1}$ be the range of $\lamxop$.  Taking $\eps$ to be  the smallest nonzero  eigenvalue of $\lamxop$, we note that $\lamxop \matrixgeq \eps \I_{V}$.  
We note that for any $\tilde{\q} \in V^{\perp}$, (b) guarantees $\brhot \perp \tilde{\q}$.  Hence $\brhot \in V$, and there is a sufficiently small $\delta$ such that $\frac{\delta}{1 + \delta \lam_{11}} \brhot \brhot^t \matrixleq  \eps \I_{V}$.  We conclude that \eqref{schur-complement-condition} holds, and hence $\exists \delta >0$ such that $\X_0 + \delta \lam \matrixgeq 0$. 
\end{proof}

\section*{Acknowledgements} P.H. is partially supported by an NSF Mathematical Sciences Postdoctoral Research Fellowship and thanks Laurent Demanet, Radu Balan, Henry Wolkowicz and Yuen-Lam Cheung for useful discussions.

\end{document}